\newif\ifdebug
\theoremstyle{plain}
\newtheorem{theo}{Theorem}[section]
\newtheorem{lemm}[theo]{Lemma}
\newtheorem{prop}[theo]{Proposition}
\newtheorem{conj}[theo]{Conjecture}
\theoremstyle{remark}
\newtheorem{rema}[theo]{Remark}
\theoremstyle{definition}
\def\C{\mathbb C}
\def\Z{\mathbb Z}
\def\co{\thinspace \colon}
\begin{document}
\title[Strong cohomological rigidity of Hirzebruch surface bundles]{Strong cohomological rigidity of Hirzebruch surface bundles in Bott towers
}

\author[H.~Ishida]{Hiroaki Ishida}
\address{Department of Mathematics and Computer Science, Graduate School of Science and Engineering, Kagoshima University}
\email{ishida@sci.kagoshima-u.ac.jp}

\date{\today}
\thanks{This work is supported by JSPS KAKENHI Grant Number JP20K03592}

\keywords{Bott manifold, Bott tower, cohomological rigidity, toric manifolds}
\subjclass[2020]{Primary 57S12; Secondary 57R19, 57S25, 14M25}

\begin{abstract}
	We show the strong cohomological rigidity of Hirzebruch surface bundles over Bott manifolds. 
	As a corollary, we have that the strong cohomological rigidity conjecture is true for Bott manifolds of dimension $8$. 
\end{abstract}

\maketitle
\section{Introduction}\label{sec:intro}
	A \emph{Bott tower} of height $n$ is an iterated $\C P^1$-bundle
	\begin{equation}\label{eq:Botttower}
		\begin{tikzcd}
			B_{n} \arrow[r, "\pi_{n}"] & B_{n-1} \arrow[r, "\pi_{n-1}"]  &  \cdots \ar[r, "\pi_2"] & B_1 \ar[r, "\pi_1"] & B_0 = \{\text{a point}\}
		\end{tikzcd}
	\end{equation}
	where each fibration $\pi_j \co B_j \to B_{j-1}$ is the projectivization of a Whitney sum of two complex line bundles over $B_{j-1}$. 
	Each $B_j$ is called a \emph{Bott manifold}. The notion of the Bott tower is introduced in \cite{GK1994}.
By definition, $B_j$ is a closed manifold of dimension $2j$ and $B_1$ is nothing but the projective line $\C P^1$. Bott manifolds of dimension $4$ are known as \emph{Hirzebruch surfaces} and introduced in \cite{Hirzebruch1951}. 
	
	By composing the first $k$ fibrations $\pi_n, \dots, \pi_{n-k+1}$, we have a fiber bundle $B_n \to B_{n-k}$ whose fibers are Bott manifold of dimension $2k$. Moreover the cohomology $H^*(B_n)$ of $B_n$ has a structure of $H^*(B_{n-k})$-algebra. In this paper, we clarify the relation between the topology of Hirzebruch surface bundles over a Bott manifold $B_n$ and the structure of the cohomology as $H^*(B_n)$-algebras. 
	The main theorem of this paper is the following.
	\begin{theo}\label{theo:maintheo}
		Let 
		\begin{equation*}
			\begin{tikzcd}
				B_{n+2} \arrow[r, "\pi_{n+2}"] & B_{n+1} \arrow[r, "\pi_{n+1}"]  & B_n \arrow[r, "\pi_n"] & \cdots \ar[r, "\pi_2"] & B_1 \ar[r, "\pi_1"] & B_0 = \{\text{a point}\}
			\end{tikzcd}
		\end{equation*}
		and 
		\begin{equation*}
		\begin{tikzcd}
			B_{n+2}' \arrow[r, "\pi_{n+2}'"] & B_{n+1}' \arrow[r, "\pi_{n+1}'"]  & B_n \arrow[r, "\pi_n"] & \cdots \ar[r, "\pi_2"] & B_1 \ar[r, "\pi_1"] & B_0 = \{\text{a point}\}
		\end{tikzcd}
	\end{equation*}
		be Bott towers of height $n+2$. Let $\varphi \co H^*(B_{n+2}) \to H^*(B_{n+2}')$ be an isomorphism as $H^*(B_n)$-algebras. Then, there exists a bundle isomorphism $f \co B_{n+2}' \to B_{n+2}$ over $B_n$ such that $f^* = \varphi$. 
	\end{theo}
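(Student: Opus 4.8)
The plan is to reduce the statement to a purely algebraic classification of the possible $H^*(B_n)$-algebra isomorphisms and then to realise each such isomorphism by composing a short list of explicit geometric moves over $B_n$.

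\emph{Reduction to algebra.} Since a line bundle over a Bott manifold is determined by its first Chern class, I normalise each fibration to the form $\mathbb{P}(\underline{\C}\oplus L)$ and encode the two towers by their data relative to $B_n$: for the unprimed tower, $a := c_1(L_{n+1})\in H^2(B_n)$ together with $c_1(L_{n+2}) = b_0 + c\,x_{n+1}\in H^2(B_{n+1}) = H^2(B_n)\oplus\Z x_{n+1}$, and likewise $a',b_0',c'$ for the primed tower. With $A := H^*(B_n)$, iterated Leray--Hirsch presents $H^*(B_{n+2})$ as $A[x_{n+1},x_{n+2}]$ modulo the two Bott relations $x_{n+1}(x_{n+1}+a)=0$ and $x_{n+2}(x_{n+2}+b_0+c\,x_{n+1})=0$, and similarly for $H^*(B_{n+2}')$. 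An $H^*(B_n)$-algebra isomorphism $\widetilde\varphi$ is determined by the degree-two classes $\alpha := \widetilde\varphi(x_{n+1})$ and $\beta := \widetilde\varphi(x_{n+2})$ in $H^2(B_{n+2}') = H^2(B_n)\oplus\Z x_{n+1}'\oplus\Z x_{n+2}'$; writing $\alpha = u + p\,x_{n+1}' + q\,x_{n+2}'$ and $\beta = v + r\,x_{n+1}' + s\,x_{n+2}'$ with $u,v\in H^2(B_n)$, the conditions that $\widetilde\varphi$ be a well-defined homomorphism are the identities $\alpha(\alpha+a)=0$ and $\beta(\beta+b_0+c\,\alpha)=0$ in $H^*(B_{n+2}')$, while bijectivity forces in particular $\bigl(\begin{smallmatrix}p&q\\ r&s\end{smallmatrix}\bigr)\in GL_2(\Z)$. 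Expanding the two identities in the $A$-basis $\{1,x_{n+1}',x_{n+2}',x_{n+1}'x_{n+2}'\}$ produces a finite system of polynomial identities in $A$ linking $(a,b_0,c)$, $(a',b_0',c')$ and $(u,v,p,q,r,s)$; already the $x_{n+1}'x_{n+2}'$-coefficient of $\alpha(\alpha+a)$ gives the integer identity $q(2p-qc')=0$, which together with $\det=\pm1$ forces $q\in\{0,\pm1,\pm2\}$.

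\emph{Geometric moves.} I isolate three families of bundle isomorphisms over $B_n$ and record their effect both on the data and on the induced map of cohomology: (i) the fibrewise \emph{flip} $\mathbb{P}(\underline{\C}\oplus L)\cong\mathbb{P}(\underline{\C}\oplus L^{-1})$ in either of the two $\C P^1$-directions, which on cohomology shifts one generator by an element of $H^2(B_n)$ read off from the data and replaces $c$ by $\pm c$; (ii) when $c=0$, so that $B_{n+2} = B_{n+1}\times_{B_n}\mathbb{P}(\underline{\C}\oplus M)$ with $c_1(M)=b_0$, the \emph{swap} of the two $\C P^1$-factors, which interchanges the two generators; and (iii) a smooth --- and in general non-holomorphic --- \emph{Hirzebruch-type isomorphism} $\Sigma_c\cong\Sigma_{c\pm2}$ carried out continuously over $B_n$, which realises the residual unipotent substitutions such as $x_{n+1}\mapsto x_{n+1}'+x_{n+2}'$ and thereby accounts for the cohomology isomorphisms that are not induced by any holomorphic map. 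Composing the given $\widetilde\varphi$ with finitely many of these moves reduces to the case in which $\bigl(\begin{smallmatrix}p&q\\ r&s\end{smallmatrix}\bigr)$ lies in a short explicit list of normal forms.

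\emph{Conclusion, and the main obstacle.} Passing to the fibre of $B_{n+2}\to B_n$ --- i.e.\ tensoring $\widetilde\varphi$ with $\Z$ over $A$ --- yields a ring isomorphism $H^*(\Sigma_c)\cong H^*(\Sigma_{c'})$, so $c\equiv c'\pmod 2$, the only numerical constraint and a reflection of the classical diffeomorphism $\Sigma_k\cong\Sigma_{k+2}$. For each normal form of the matrix part one then solves the residual system: the divisibility analysis pins down $q$, after which $r,s$ and the shifts $u,v$ are determined by the data, and what survives says exactly that $(a',b_0',c')$ arises from $(a,b_0,c)$ by the corresponding move; absorbing the shifts into flips, that move is the desired $\widetilde f$, and by construction $\widetilde f^* = \widetilde\varphi$. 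I expect the bulk of the work to be twofold: carrying the combined book-keeping of matrix part, shift part and the integer $c$ coherently through all the $GL_2(\Z)$ cases; and, more seriously, constructing the family version of the Hirzebruch diffeomorphism in (iii) so that it induces the \emph{prescribed} cohomology isomorphism on the nose --- including its effect on the top-degree class --- rather than merely producing some isomorphism between a $\Sigma_c$-bundle and a $\Sigma_{c\pm2}$-bundle over $B_n$. This last point is where the non-holomorphic nature of \emph{strong} cohomological rigidity genuinely enters, and I anticipate it carries most of the technical weight.
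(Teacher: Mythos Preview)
Your outline is essentially the paper's: pass to the fibre to get an isomorphism of $H^*(\Sigma_a)$, classify the finitely many $GL_2(\Z)$ matrices that can occur, derive the constraints in $H^*(B_n)$ forced by lifting each matrix, and then realise everything by geometric moves over $B_n$. Your moves (i) and (ii) are exactly the paper's, and your identification of move (iii) as the crux is correct.

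Where you and the paper diverge is in how (iii) is handled, and this is the substantive point. You frame (iii) as constructing a family version of the smooth diffeomorphism $\Sigma_c\cong\Sigma_{c\pm2}$ over $B_n$ and worry, rightly, that this is hard in general. The paper does \emph{not} attempt this. Instead, the exhaustive case analysis of which fibrewise automorphisms $\varphi$ of $H^*(\Sigma_a)$ actually lift to $H^*(B_{n+2})$ (the paper's Section~4, your ``book-keeping'') is used not merely as book-keeping but as the engine that simplifies the bundle. Concretely: whenever the matrix part is not upper triangular, the lifting conditions force relations such as $y=-\tfrac{a}{2}c_1(\xi_{n+1})$, evenness of $c_1(\xi_{n+1})$, and $c_1(\xi_{n+1})^2=0$ in the relevant subcases. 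These constraints, combined with the fact that rank~$2$ decomposable bundles over a Bott manifold are determined by their Chern classes (the paper's Theorem~2.4), let one replace the given $\Sigma_a$-bundle by an isomorphic one with $a=0$ (when $a$ is even), or by the \emph{trivial} $\Sigma_a$-bundle (when $a$ is odd with $|a|>1$). In those reduced situations only moves (i), (ii) and the known strong rigidity of a single Hirzebruch surface are needed.

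The one case that genuinely resists this reduction is $a=\pm1$, and here the paper's device is different from yours: it checks by hand (its Section~3) that the three explicit self-diffeomorphisms of $\Sigma_{\pm1}$ generating $\mathrm{Aut}\,H^*(\Sigma_{\pm1})$ are equivariant for a specific $S^1$-action, and that the constraint $y=-\tfrac{a}{2}c_1(\xi_{n+1})$ forced by the algebra is exactly what reduces the structure group of $B_{n+2}\to B_n$ to that $S^1$. The fibrewise diffeomorphism then extends to a bundle automorphism automatically. So the ``family Hirzebruch move'' you anticipate never has to be built in the large; the algebra squeezes you into situations where either the bundle is already a fibre product or trivial, or an equivariance argument does the job. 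Your plan would work if you incorporate this: treat the constraint computation not as a side calculation but as the mechanism that trivialises the hard geometric step.
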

	This study is motivated by the \emph{strong cohomological rigidity conjecure} posed in \cite{Choi2015}. 
	\begin{conj}[Strong cohomological rigidity conjecture for Bott manifolds]\label{conj:scrc}
		Any graded ring isomorphism between the cohomology rings of two Bott manifolds is induced by a diffeomorphism.
	\end{conj}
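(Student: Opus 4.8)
The plan is to attack Conjecture~\ref{conj:scrc} by induction on the height $N$ of the Bott manifold, using Theorem~\ref{theo:maintheo} to dispatch the top two stages of the tower. Let $\varphi \co H^*(B_N) \to H^*(B_N')$ be an arbitrary graded ring isomorphism between two Bott manifolds of height $N$, and write $x_1, \dots, x_N$ and $x_1', \dots, x_N'$ for the degree-two generators attached to the two Bott tower structures, so that $x_j^2 = (\sum_{i<j} a_{ij} x_i)\,x_j$ and similarly for the $x_j'$. The base cases of small height are classical: height $1$ is $\C P^1$, height $2$ is the rigidity of Hirzebruch surfaces, and I would take as inductive hypothesis that the conjecture holds for all Bott manifolds of height strictly less than $N$.

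The crux is an \emph{absolute-to-relative reduction}: I would show that, after composing $\varphi$ with an isomorphism induced by a diffeomorphism, one may reorganize the tower for $B_N'$ so that its bottom $N-2$ stages are identified with those of $B_N$, yielding a common base $B_{N-2}$, and so that the resulting $\varphi$ is an $H^*(B_{N-2})$-algebra isomorphism. The mechanism is to analyze $\varphi$ on the degree-two part $H^2(B_N) \cong \Z^N$ together with the quadratic relations, which single out a canonical filtration refining the fibration structure. The aim is to prove that $\varphi$ must carry this filtration to the corresponding filtration on $H^*(B_N')$, up to the indeterminacy already realized by diffeomorphisms, down at least to level $N-2$. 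Granting this, the induced map on the bottom quotient is a graded isomorphism of height-$(N-2)$ Bott manifolds, so the inductive hypothesis supplies a base diffeomorphism; composing with the isomorphism it induces turns $\varphi$ into a genuine $H^*(B_{N-2})$-algebra isomorphism between two towers over the common $B_{N-2}$.

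At that point Theorem~\ref{theo:maintheo} applies directly, with $n = N-2$: it produces a bundle diffeomorphism $\widetilde f \co B_N' \to B_N$ over $B_{N-2}$ with $\widetilde f^* = \varphi$. Splicing $\widetilde f$ together with the base diffeomorphism obtained above gives a diffeomorphism $B_N' \to B_N$ inducing the original $\varphi$, which closes the induction.

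The main obstacle is exactly the reduction step of the previous paragraph, and it is the reason this program delivers only the low-dimensional corollary rather than the full conjecture. An arbitrary graded isomorphism need not visibly respect the fibration filtration, and for the argument to succeed the supply of diffeomorphism-realizable cohomology automorphisms must be rich enough to absorb the discrepancy at \emph{every} level simultaneously. When $N = 4$ (real dimension $8$) the common base sought is $B_{N-2} = B_2$, a Hirzebruch surface, whose graded automorphisms and diffeomorphism type are completely classified; the filtration reduction can then be verified by hand, which is precisely the content of the dimension-$8$ corollary. Carrying the reduction out uniformly in $N$ appears to demand a genuinely new structural understanding of graded isomorphisms of higher Bott manifolds, beyond what Theorem~\ref{theo:maintheo} provides, and this is why Conjecture~\ref{conj:scrc} remains open in general.
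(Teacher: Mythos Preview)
The statement you were asked to prove is a \emph{conjecture}; the paper does not prove it and does not claim to. What the paper establishes is Theorem~\ref{theo:maintheo}, and it then remarks, citing \cite[Problem~3.5]{Choi2015}, that this theorem implies the dimension-$8$ case of the conjecture. Your proposal is accordingly not a proof, and you acknowledge as much in your final paragraph: the ``absolute-to-relative reduction'' is a genuine gap, and it is exactly the reason the conjecture remains open in general.

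Your inductive architecture is the right shape, and for $N=4$ it matches how the dimension-$8$ corollary is actually obtained: one first reduces an arbitrary graded isomorphism $H^*(B_4)\to H^*(B_4')$ to an $H^*(B_2)$-algebra isomorphism over a common Hirzebruch-surface base, and then Theorem~\ref{theo:maintheo} with $n=2$ finishes. But you should be aware that even this $N=4$ reduction is not carried out in the present paper; it is imported from \cite{Choi2015}. Your phrase ``can then be verified by hand'' underestimates the work involved---an arbitrary graded automorphism of $H^*(B_4)$ need not visibly preserve any $H^*(B_2)$-subalgebra, and arranging this up to diffeomorphism is a nontrivial case analysis that the paper does not reproduce. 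So neither the paper nor your proposal contains a self-contained argument even for dimension~$8$, and for larger $N$ the reduction step is simply unavailable.
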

	So far, no counterexamples are known to Conjecture \ref{conj:scrc} and some partial affirmative results are known. Conjecture \ref{conj:scrc} is known to be true for Bott manifolds of dimension up to $6$ (see \cite[Theorem A]{Choi2015}). Conjecture \ref{conj:scrc} is also true if we restrict Bott manifolds to $\mathbb{Q}$-trivial (see \cite[Corollary 5.1]{CM2012}) or $\Z/2\Z$-trivial (\cite[Theorem 1.3]{CMM2015}) ones. A Bott manifold $B_n$ is said to be $\mathbb{Q}$-trivial (respectively, $\Z/2\Z$-trivial) if $H^*(B_n)\otimes \mathbb{Q} \cong H^*((\C P^1)^n)\otimes \mathbb{Q}$ (respectively, $H^*(B_n)\otimes \Z/2\Z \cong H^*((\C P^1)^n)\otimes \Z/2\Z$). In \cite[Problem 3.5]{Choi2015}, it is pointed out that Theorem \ref{theo:maintheo} implies that Conjecture \ref{conj:scrc} is also true for Bott manifolds of dimension $8$. 
	
	This paper is organized as follows. In Section \ref{sec:preliminaries}, we recall some preliminary facts about Bott manifolds that we need. In Section \ref{sec:Hirzebruch}, we study the topology of Hirzebruch surfaces. Especially, we show an $S^1$-equivariant version of the cohomological rigidity of a Hirzebruch surface. Sections \ref{sec:auto}, \ref{sec:realizing}, \ref{sec:rigidity} are devoted to prove Theorem \ref{theo:maintheo}. In Section \ref{sec:auto}, we study the automorphism of $H^*(B_{n+2})$ as an $H^*(B_n)$-algebra. In Section \ref{sec:realizing}, we show that any automorphism of $H^*(B_{n+2})$ as an $H^*(B_n)$-algebra is induced by a bundle automorphism. In Section \ref{sec:rigidity}, we show that isomorphism classes of Hirzebruch surface bundles over a Bott manifold $B_n$ are distinguished by their cohomologies as $H^*(B_n)$-algebras. These results obtained in Sections \ref{sec:realizing} and \ref{sec:rigidity} imply Theorem \ref{theo:maintheo} immediately.

	Throughout this paper, all cohomologies are taken with $\Z$-coefficient. 
	
	\noindent {\bf Acknowledgement.} The author is grateful to the anonymous referee for the invaluable comments and useful suggestions on improving the text.
	
\section{Preliminaries}\label{sec:preliminaries}
	In this section we recall elementary facts about Bott manifolds for later use. 
	\begin{lemm}[{\cite[Lemma 2.1]{CMS2010}}]\label{lemm:tensor}
		Let $B$ be a smooth manifold. Let $L$ be a complex line bundle over $B$ and $V$ a complex vector bundle over $B$. Then, the projectivizations of $V$ and $L \otimes V$ are  isomorphic as bundles over $B$. 
	\end{lemm}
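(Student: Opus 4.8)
The plan is to write down the evident fiberwise identification and check that it is an isomorphism of fiber bundles over $B$. Recall that a point of the projectivization $P(V)$ lying over $b \in B$ is a complex line $\ell \subset V_b$, and that a point of $P(L \otimes V)$ over $b$ is a complex line in $(L \otimes V)_b = L_b \otimes V_b$. Since $L_b$ is a one-dimensional complex vector space, $L_b \otimes \ell$ is a line in $L_b \otimes V_b$ for every line $\ell \subset V_b$. Hence
\[
	\Phi \co P(V) \longrightarrow P(L \otimes V), \qquad (\ell \subset V_b) \longmapsto L_b \otimes \ell \subset (L\otimes V)_b,
\]
is a well-defined set map covering the identity of $B$, and on each fiber it is a bijection whose inverse is $\eta \mapsto L_b^{-1} \otimes \eta$ (under the canonical identification $L^{-1} \otimes L \otimes V \cong V$). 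It remains only to verify that $\Phi$ and $\Phi^{-1}$ are smooth.

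For this I would trivialize $L$. Cover $B$ by open sets $U_\alpha$ on which $L$ admits a nowhere vanishing section $s_\alpha$; then over $U_\alpha$ the linear bundle map $V|_{U_\alpha} \to (L\otimes V)|_{U_\alpha}$, $v \mapsto s_\alpha(b) \otimes v$ (for $v \in V_b$, $b \in U_\alpha$), is an isomorphism of vector bundles over $U_\alpha$, and its projectivization is exactly the restriction of $\Phi$ to $P(V)|_{U_\alpha}$. In particular $\Phi$ is smooth over each $U_\alpha$, hence smooth; the same argument with $L^{-1}$ and the sections $s_\alpha^{-1}$ shows $\Phi^{-1}$ is smooth. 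Therefore $\Phi$ is a diffeomorphism commuting with the projections to $B$, i.e. a bundle isomorphism $P(V) \cong P(L\otimes V)$ over $B$.

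I expect no real obstacle here; the only substantive point is that on the overlaps $U_\alpha \cap U_\beta$ the two local linear isomorphisms above differ by the scalar-valued transition function $s_\beta/s_\alpha$ of $L$, and scalars act trivially on projective space, so the local descriptions of $\Phi$ are genuinely consistent and $\Phi$ is intrinsically defined. Equivalently, one could phrase the whole argument in terms of transition functions: if $V$ has transition functions $g_{\alpha\beta}$ valued in $GL(k,\C)$ and $L$ has transition functions $h_{\alpha\beta}$ valued in $\C^\times$, then $P(V)$ and $P(L\otimes V)$ have transition functions the images of $g_{\alpha\beta}$ and $h_{\alpha\beta}g_{\alpha\beta}$ in $PGL(k,\C)$, which coincide because $h_{\alpha\beta}(x)$ is a scalar matrix.
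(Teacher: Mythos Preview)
Your proof is correct and is the standard argument. Note, however, that the paper does not actually supply its own proof of this lemma: it merely cites \cite[Lemma~2.1]{CMS2010} and uses the statement as a black box, so there is nothing to compare against beyond observing that your argument is exactly the expected one.
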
 
	Thanks to Lemma \ref{lemm:tensor} we may assume that one of line bundles over $B_j$ is the product line bundle $\underline{\C}$ in \eqref{eq:Botttower} without loss of generality. Throughout this paper, we assume that each fibration $B_j \to B_{j-1}$ is a projectivization of $P(\underline{\C} \oplus \xi_j) \to B_{j-1}$, where $\underline{\C}$ is a product line bundle and $\xi_j$ is a complex line bundle over $B_{j-1}$. 
	
	Let $B$ be a smooth manifold and $V$ a complex vector bundle over $B$. The tautological line bundle $\gamma$ of $P(V)$ is a line bundle over $P(V)$ given by 
	\begin{equation*}
		\gamma = \{ (\ell, v) \in P(V) \times V \mid \ell \ni v\},
	\end{equation*}
	here we think of $P(V)$ as a set of all lines passing $0$ in $V$. The first Chern classes of tautological bundles play a role to describe the cohomology ring of Bott manifolds. 
	Lemma \ref{lemm:Leray-Hirsch} and Theorem \ref{theo:cohomology} below are well known facts. For leader's convenience, we give brief proofs. 
	\begin{lemm}\label{lemm:Leray-Hirsch}
		Let $B$ be a smooth manifold. Let $\xi$ be a complex line bundle over $B$. Then $H^*(P(\underline{\C}\oplus \xi))$ is isomorphic to $H^*(B)[X]/(X^2-c_1(\xi)X)$ as an $H^*(B)$-algebra, where $\deg X = 2$. 
	\end{lemm}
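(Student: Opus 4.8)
The plan is to identify $H^*(P(\underline{\C}\oplus\xi))$ via the Leray--Hirsch theorem using the first Chern class of the tautological line bundle as the fiberwise generator. Let $\pi \co P(\underline{\C}\oplus\xi) \to B$ denote the projection and $\gamma$ the tautological line bundle on the total space; set $X = -c_1(\gamma) \in H^2(P(\underline{\C}\oplus\xi))$. (The sign is a matter of convention; I will choose it so that the final relation comes out as stated.) First I would observe that the restriction of $\gamma$ to each fiber $P(\underline{\C}\oplus\xi)|_b \cong \C P^1$ is the tautological line bundle $\mathcal{O}(-1)$ on $\C P^1$, so $X$ restricts to a generator of $H^2(\C P^1)$ and, together with $1$, restricts to a basis of $H^*(\C P^1)$. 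By the Leray--Hirsch theorem, $H^*(P(\underline{\C}\oplus\xi))$ is then a free $H^*(B)$-module on the basis $\{1, X\}$, which already gives the additive structure and shows the natural map $H^*(B)[X] \to H^*(P(\underline{\C}\oplus\xi))$ is surjective with kernel generated by a single monic quadratic polynomial $X^2 - aX - b$ with $a, b \in H^*(B)$ (pulled back along $\pi$, which is injective on cohomology here).

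Next I would pin down the coefficients $a$ and $b$. The cleanest route is the Whitney sum formula applied to the tautological exact sequence of bundles over $P(\underline{\C}\oplus\xi)$, namely $0 \to \gamma \to \pi^*(\underline{\C}\oplus\xi) \to Q \to 0$ where $Q$ is the quotient line bundle. Taking total Chern classes gives $c(\gamma)\,c(Q) = \pi^*c(\underline{\C}\oplus\xi) = \pi^*c(\xi) = 1 + \pi^*c_1(\xi)$. Writing $c_1(\gamma) = -X$ and $c_1(Q) = y$, this reads $(1-X)(1+y) = 1 + \pi^*c_1(\xi)$, so $y = X - \pi^*c_1(\xi)\cdot(\text{unit})$; more carefully, expanding degree by degree yields $y - X = 0$ in degree $2$ only up to the relation, so I would instead argue that $\gamma \otimes Q \cong \pi^*(\det(\underline{\C}\oplus\xi)) = \pi^*\xi$ as line bundles, hence $-X + y = \pi^*c_1(\xi)$, i.e. $c_1(Q) = X + \pi^*c_1(\xi)$. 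Now the subbundle $\underline{\C} \hookrightarrow \pi^*(\underline{\C}\oplus\xi)$ composes with the projection to $Q$ to give a section of $Q \otimes \gamma^*$; its zero locus is exactly the sub-$\C P^1$-bundle $P(\xi)$, and the vanishing locus of the "other" summand $\pi^*\xi \hookrightarrow \pi^*(\underline{\C}\oplus\xi) \to Q\otimes\gamma^*$ is $P(\underline{\C})$, a section of $\pi$. Multiplying the two Euler classes: $c_1(Q\otimes\gamma^*)^2$ would vanish if the two sections were disjoint, but they are not the same computation — instead I use that $\gamma|_{P(\underline{\C})}$ is trivial, so $X$ restricts to $0$ on the section $P(\underline{\C})$, giving the relation $X(X - c_1(\xi)) = 0$ after identifying which combination vanishes on each distinguished section. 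Concretely, $X^2 = c_1(\xi)X$ follows because $X$ and $X - \pi^*c_1(\xi)$ are the Euler classes of the two tautological sub-line-bundle configurations and their product is the Euler class of a rank-two bundle with a nowhere-zero section, hence zero.

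I would then verify the $H^*(B)$-algebra isomorphism statement precisely: the surjection $H^*(B)[X]/(X^2 - c_1(\xi)X) \to H^*(P(\underline{\C}\oplus\xi))$ is an $H^*(B)$-algebra map by construction (sending $X \mapsto -c_1(\gamma)$ and $H^*(B)$ via $\pi^*$), it is surjective by Leray--Hirsch, and since both sides are free $H^*(B)$-modules of rank $2$ with the map sending the basis $\{1,X\}$ to the Leray--Hirsch basis $\{1, -c_1(\gamma)\}$, it is an isomorphism. The main obstacle, and the only place requiring genuine care, is the determination of the quadratic relation with the correct coefficient $c_1(\xi)$ (and not, say, $-c_1(\xi)$ or $c_1(\xi)$ times the wrong generator) — this is entirely a bookkeeping matter of sign conventions for $\gamma$ versus its dual and for the splitting $\underline{\C}\oplus\xi$, and it is the step where I would slow down and check against the known special case $B = \{pt\}$, $\xi = \underline{\C}$, which must give $H^*(\C P^1) = \Z[X]/(X^2)$, and against $\xi$ a nontrivial bundle over $\C P^1$ recovering the standard presentation of a Hirzebruch surface.
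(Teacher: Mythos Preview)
Your approach is essentially the paper's: apply Leray--Hirsch with $c_1$ of the tautological bundle as the fiberwise generator, then read off the quadratic relation from the Whitney formula on the splitting of $\pi^*(\underline{\C}\oplus\xi)$ into $\gamma$ and its complement. Two points deserve comment.

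First, your sign convention is the wrong one. From $c(\gamma)c(Q)=1+\pi^*c_1(\xi)$ you get $c_1(\gamma)+c_1(Q)=\pi^*c_1(\xi)$ and $c_1(\gamma)c_1(Q)=0$, hence $c_1(\gamma)^2=c_1(\xi)\,c_1(\gamma)$. Thus it is $X=c_1(\gamma)$ (the paper's choice) that satisfies $X^2=c_1(\xi)X$; with your $X=-c_1(\gamma)$ the relation becomes $X^2=-c_1(\xi)X$. Your promised sanity check on a nontrivial Hirzebruch surface would have caught this, but as written the stated convention does not produce the stated relation.

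Second, the digression through sections, zero loci, and Euler classes is both unnecessary and muddled (for instance, the composite $\underline{\C}\hookrightarrow\pi^*(\underline{\C}\oplus\xi)\to Q$ is a section of $Q$, not of $Q\otimes\gamma^*$, and the claimed identification of ``$X$ and $X-\pi^*c_1(\xi)$'' as the relevant Euler classes is off by the same sign). The paper avoids all of this: it picks a Hermitian metric, writes $\pi^*(\underline{\C}\oplus\xi)=\gamma\oplus\gamma^\perp$, and immediately reads off $c_1(\gamma^\perp)=\pi^*c_1(\xi)-c_1(\gamma)$ and $c_1(\gamma)c_1(\gamma^\perp)=0$ from the total Chern class. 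That is exactly the Whitney-formula step you started with before detouring, and it already finishes the proof.
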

	\begin{proof}
		Let $\gamma$ be the tautological line bundle of $P(\underline{\C}\oplus \xi)$. Let $x \in B$ and $(\underline{\C} \oplus \xi)_x$ the fiber of $\underline{\C} \oplus \xi$ at $x$. Then, the restriction of $\gamma$ to $P((\underline{\C} \oplus \xi)_x)$ is nothing but the tautological line bundle over the projective line $P((\underline{\C} \oplus \xi)_x)$. Since $H^*(P((\underline{\C} \oplus \xi)_x))$ is generated by $1$ and the first Chern class of the tautological line bundle as a $\Z$-module, Leray-Hirsch Theorem (see \cite{Hatcher2002} for example) yields that $H^*(P(\underline{\C}\oplus \xi))$ is generated by $1$ and $c_1(\gamma)$ as an $H^*(B)$-module.
		
		Let $\pi \co P(\underline{\C} \oplus \xi) \to B$ be the projection. We show that $c_1(\gamma)(-c_1(\gamma)+\pi^*c_1(\xi)) =0$. Let $\gamma^\perp$ be the orthogonal complement of $\gamma$ in $\underline{\C} \oplus \xi$ for some hermitian metric. Since $\gamma \oplus \gamma^\perp = \pi^*(\underline{\C} \oplus \xi)$, by comparing Chern classes we have that $c_1(\gamma^{\perp}) = -c_1(\gamma)+\pi^*c_1(\xi)$ and $c_1(\gamma)c_1(\gamma^\perp) = 0$. Thus we have $c_1(\gamma)(-c_1(\gamma)+\pi^*c_1(\xi)) =0$. 
		
		The computation above shows that the homomorphism $H^*(B)[X]/(X^2-c_1(\xi)X) \to H^*(P(\underline{\C}\oplus \xi))$ given by $X \mapsto c_1(\gamma)$ is an isomorphism as $H^*(B)$-algebras. 
	\end{proof}
	Let 
	\begin{equation*}
		\begin{tikzcd}
			B_\bullet \co  B_n \arrow[r, "\pi_n"] & B_{n-1} \ar[r, "\pi_{n-1}"] & \cdots \ar[r, "\pi_2"] & B_1 \ar[r, "\pi_1"] & B_0 = \{\text{a point}\}
		\end{tikzcd}
	\end{equation*}
	be a Bott tower of height $n$. Let $\gamma_j$ be the tautological line bundle over $B_j = P(\underline{\C} \oplus \xi_j)$. For integers $j,k$ with $1 \leq j \leq k \leq n$, we define 
	\begin{equation*}
		\begin{split}
			x_j^{(k)} &:= (\pi_{j+1} \circ \dots \circ \pi_{k})^*(c_1(\gamma_j)) \in H^2(B_k), \\
			\alpha_j^{(k)} & := (\pi_{j} \circ \dots \circ \pi_k)^*(c_1(\xi_j)) \in H^2(B_k). 
		\end{split}
	\end{equation*}
	By applying Lemma \ref{lemm:Leray-Hirsch} eventually, we have the following.
	\begin{theo}\label{theo:cohomology}
		Let $B_k$ be the Bott manifold as above. The followings hold:
		\begin{enumerate}
			\item $\alpha_j^{(k)}$ is a linear combination of $x_\ell^{(k)}$, $\ell =1, \dots, j-1$. 
			\item $H^*(B_k) = \Z[x_1^{(k)},\dots, x_k^{(k)}]/((x_j^{(k)})^2 - \alpha_j^{(k)}x_j^{(k)} \mid j=1,\dots, k)$.  
		\end{enumerate}
	\end{theo}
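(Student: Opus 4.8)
The plan is to prove Theorem \ref{theo:cohomology} by induction on $k$, feeding the structure of $H^*(B_{k-1})$ into Lemma \ref{lemm:Leray-Hirsch} applied to the top fibration $\pi_k \co B_k = P(\underline{\C}\oplus\xi_k) \to B_{k-1}$. The base case $k=1$ is immediate: $B_1 = \C P^1$, $\xi_1$ is a bundle over a point hence trivial, so $\alpha_1^{(1)} = 0$ and $H^*(B_1) = \Z[x_1^{(1)}]/((x_1^{(1)})^2)$, and there is no $\alpha_\ell$ with $\ell < 1$ so (1) holds vacuously.

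For the inductive step, assume both statements hold for $B_{k-1}$. First I would establish (1) for level $k$. The class $\alpha_j^{(k)}$ is by definition $\pi_k^*$ of $\alpha_j^{(k-1)}$ for $j \leq k-1$, and $\pi_k^*$ sends $x_\ell^{(k-1)}$ to $x_\ell^{(k-1\to k)} := x_\ell^{(k)}$ for $\ell \leq k-1$ (this is just the definition of $x_\ell^{(k)}$ together with functoriality of pullback along the composition $\pi_{\ell+1}\circ\cdots\circ\pi_k = (\pi_{\ell+1}\circ\cdots\circ\pi_{k-1})\circ\pi_k$). By the inductive hypothesis $\alpha_j^{(k-1)}$ is a $\Z$-linear combination of $x_1^{(k-1)},\dots,x_{j-1}^{(k-1)}$, so applying $\pi_k^*$ gives $\alpha_j^{(k)}$ as the same linear combination of $x_1^{(k)},\dots,x_{j-1}^{(k)}$; this proves (1) for all $j \leq k$ (there is nothing to prove when $j=k$ since $\alpha_k^{(k)}$ is not claimed to be among the generators it is built from — actually $\alpha_k^{(k)} = c_1(\xi_k)$ pulled back, and since $\xi_k$ is a bundle over $B_{k-1}$ whose cohomology is generated by $x_1^{(k-1)},\dots,x_{k-1}^{(k-1)}$, the class $c_1(\xi_k) \in H^2(B_{k-1})$ is a $\Z$-linear combination of $x_1^{(k-1)},\dots,x_{k-1}^{(k-1)}$, whence $\alpha_k^{(k)}$ is a linear combination of $x_1^{(k)},\dots,x_{k-1}^{(k)}$).

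For (2), apply Lemma \ref{lemm:Leray-Hirsch} with $B = B_{k-1}$ and $\xi = \xi_k$: it gives an $H^*(B_{k-1})$-algebra isomorphism
\begin{equation*}
	H^*(B_{k-1})[X]/\bigl(X^2 - c_1(\xi_k)X\bigr) \xrightarrow{\ \sim\ } H^*(B_k), \qquad X \mapsto c_1(\gamma_k) = x_k^{(k)}.
\end{equation*}
Now substitute the inductive presentation $H^*(B_{k-1}) = \Z[x_1^{(k-1)},\dots,x_{k-1}^{(k-1)}]/\bigl((x_j^{(k-1)})^2 - \alpha_j^{(k-1)}x_j^{(k-1)}\bigr)$, and relabel $x_j^{(k-1)} \mapsto x_j^{(k)}$ under $\pi_k^*$ (which is the ring map defining the $H^*(B_{k-1})$-algebra structure used in the lemma). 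Since $c_1(\xi_k) \in H^2(B_{k-1})$ pulls back to $\alpha_k^{(k)}$, the extra relation $X^2 - c_1(\xi_k)X$ becomes $(x_k^{(k)})^2 - \alpha_k^{(k)}x_k^{(k)}$. Assembling the $k-1$ relations coming from $H^*(B_{k-1})$ with this new one yields exactly $H^*(B_k) = \Z[x_1^{(k)},\dots,x_k^{(k)}]/\bigl((x_j^{(k)})^2 - \alpha_j^{(k)}x_j^{(k)} \mid j = 1,\dots,k\bigr)$, completing the induction.

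The only genuinely substantive point — and the place to be careful rather than a real obstacle — is bookkeeping the naturality of the classes under $\pi_k^*$: one must check that $\pi_k^*\bigl(x_\ell^{(k-1)}\bigr) = x_\ell^{(k)}$ and $\pi_k^*\bigl(\alpha_j^{(k-1)}\bigr) = \alpha_j^{(k)}$, which is immediate from the definitions of $x_j^{(k)}$ and $\alpha_j^{(k)}$ as iterated pullbacks and the functoriality $(\pi_{\ell+1}\circ\cdots\circ\pi_k)^* = \pi_k^*\circ(\pi_{\ell+1}\circ\cdots\circ\pi_{k-1})^*$. Once that identification is in place, everything is a formal consequence of Lemma \ref{lemm:Leray-Hirsch} and the fact that presenting a ring $R = S[X]/(f(X))$ and then presenting $S$ by generators and relations gives a presentation of $R$ by the combined generators and relations.
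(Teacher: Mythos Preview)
Your proposal is correct and is precisely the argument the paper has in mind: the paper's ``proof'' is the single sentence ``By applying Lemma \ref{lemm:Leray-Hirsch} eventually, we have the following,'' and your induction on $k$ simply unpacks that, carrying the presentation of $H^*(B_{k-1})$ through Lemma \ref{lemm:Leray-Hirsch} for $\pi_k$ and tracking the naturality $\pi_k^*(x_\ell^{(k-1)}) = x_\ell^{(k)}$, $\pi_k^*(\alpha_j^{(k-1)}) = \alpha_j^{(k)}$. The only cosmetic wrinkle is the parenthetical for $j=k$, where you momentarily say ``there is nothing to prove'' before (correctly) giving the argument that $c_1(\xi_k)\in H^2(B_{k-1})$ lies in the span of $x_1^{(k-1)},\dots,x_{k-1}^{(k-1)}$; you might streamline that sentence, but the mathematics is fine.
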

	Put $x_j := x_j^{(n)}$.
	It follows from Theorem \ref{theo:cohomology} that $H^*(B_n)$ is generated by degree $2$ elements $x_1, \dots, x_n$ and they form a basis of $H^2(B_n)$. We call them the standard generators of $H^*(B_n)$ with respect to $\xi_1,\dots, \xi_n$.
	
	Suppose that
		\begin{equation*}
			\alpha_j^{(k)} = \sum_{\ell=1}^{j-1} a_j^\ell x_{\ell}^{(k)}, \quad a_j^\ell \in \Z.
		\end{equation*}
	Let $\gamma_j^{(k)}$ be the line bundle over $B_k$ given by
	\begin{equation*}
		\gamma_j^{(k)} : =(\pi_{j+1} \circ \dots \circ \pi_{k})^* \gamma_j. 
	\end{equation*}
	Since $c_1(\gamma_j^{(k)}) = x_j^{(k)}$ and isomorphism classes of line bundles are distinguished by their first Chern classes, we have that the line bundle $\xi_j$ over $B_{j-1}$ is isomorphic to $(\gamma_1^{(j-1)})^{\otimes a_j^1} \otimes \dots \otimes (\gamma_{j-1}^{(j-1)})^{\otimes a_j^{j-1}}$. 
	For simplicity, we assume that $\xi_j = (\gamma_1^{(j-1)})^{\otimes a_j^1} \otimes \dots \otimes (\gamma_{j-1}^{(j-1)})^{\otimes a_j^{j-1}}$. Let $\rho_j \co (S^1)^k \to S^1$ be the homomorphism given by 
	$\rho_j(t) = \prod_{\ell=1}^{j-1}t_\ell^{a_j^\ell}$
	for $t = (t_1,\dots, t_k) \in (S^1)^k$.
	We use the same symbol even if $k$ is different. Let $M_k$ be the quotient manifold $(S^3)^k/(S^1)^k$ by the action of $(S^1)^k$ on $(S^3)^k$ given by
	\begin{equation*}
		\begin{split}
			&(t_1,\dots, t_k) \cdot ((z_1,w_1), \dots, (z_k, w_k))\\
			& = ((t_1z_1, t_1\rho_1(t)^{-1}w_1), \dots, (t_kz_1, t_k\rho_k(t)^{-1}w_k)). 
		\end{split}
	\end{equation*}
	Let $p_k \co M_k \to M_{k-1}$ be the map induced by the projection $(S^3)^k \to (S^3)^{k-1}$. Then $p_k \co M_k \to M_{k-1}$ is a $\C P^1$-bundle. Therefore we have the iterated $\C P^1$-bundle 
	\begin{equation*}
		\begin{tikzcd}
			M_\bullet \co  M_n \arrow[r, "p_n"] & M_{n-1} \ar[r, "p_{n-1}"] & \cdots \ar[r, "p_2"] & M_1 \ar[r, "p_1"] & M_0 = \{\text{a point}\}. 
		\end{tikzcd}
	\end{equation*}
	Let $L_k$ be the quotient $((S^3)^k \times \C)/(S^1)^k$ by the action of $(S^1)^k$ on $(S^3)^k \times \C$ given by
	\begin{equation*}
		\begin{split}
			&(t_1,\dots, t_k) \cdot ((z_1,w_1), \dots, (z_k, w_k), v)\\
			& = ((t_1z_1, t_1\rho_1(t)^{-1}w_1), \dots, (t_kz_1, t_k\rho_k(t)^{-1}w_k), t_k^{-1}v). 
		\end{split}
	\end{equation*}
	For a homomorphism $\rho \co (S^1)^k \to S^1$, let $L_\rho$ be the $((S^3)^k \times \C)/(S^1)^k$ by the action of $(S^1)^k$ on $(S^3)^k \times \C$ given by
	\begin{equation*}
		\begin{split}
			&(t_1,\dots, t_k) \cdot ((z_1,w_1), \dots, (z_k, w_k), v)\\
			& = ((t_1z_1, t_1\rho_1(t)^{-1}w_1), \dots, (t_kz_1, t_k\rho_k(t)^{-1}w_k), \rho(t)^{-1}v). 
		\end{split}
	\end{equation*}
	$L_k$ and $L_\rho$ are line bundles over $M_k$. 
	\begin{prop}[{See also \cite[Section 3]{CR2005}}]\label{prop:quotientconstruction}
		For all $k$ there exists a diffeomorphism $f_k \co M_k \to B_k$ such that $\pi_k\circ f_k = f_{k-1}\circ p_k$. Moreover, there exists a line bundle isomorphism $\widetilde f_k \co L_k \to \gamma_k$ which induces $f_k$. 
	\end{prop}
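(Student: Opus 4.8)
The plan is to argue by induction on $k$, the case $k=0$ being trivial with $f_0$ the identity map of a point. Assume that diffeomorphisms $f_\ell\co M_\ell\to B_\ell$ with $\pi_\ell\circ f_\ell=f_{\ell-1}\circ p_\ell$ and line bundle isomorphisms $\widetilde f_\ell\co L_\ell\to\gamma_\ell$ covering $f_\ell$ have been constructed for all $\ell\le k-1$. For brevity, given a homomorphism $\rho\co(S^1)^{k-1}\to S^1$ I write $L_\rho$ also for the line bundle over $M_{k-1}$ defined by the same formula as above with one fewer $S^3$-factor.

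First I would recognize $M_k$ as a projectivization over $M_{k-1}$. The assignment $((z_i,w_i))_{i\le k}\mapsto\bigl(((z_i,w_i))_{i<k},[z_k:w_k]\bigr)$ gives a map $(S^3)^k\to(S^3)^{k-1}\times\C P^1$ which is equivariant if we let $(S^1)^k$ act on the target through its first $k-1$ factors, by the $M_{k-1}$-action on $(S^3)^{k-1}$ and by $t\cdot[v_0:v_1]=[v_0:\rho_k(t)^{-1}v_1]$ on $\C P^1$; here one uses that $\rho_k$ does not involve the $k$-th coordinate, so that the $k$-th circle acts trivially on the target. Passing to quotients yields a bundle isomorphism $M_k\cong P(\underline{\C}\oplus L_{\rho_k})$ over $M_{k-1}$, where $L_{\rho_k}$ is the line bundle over $M_{k-1}$ attached to $\rho_k\co(S^1)^{k-1}\to S^1$.

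Next I would transport this across $f_{k-1}$. By the standing normalization $\xi_k=(\gamma_1^{(k-1)})^{\otimes a_k^1}\otimes\dots\otimes(\gamma_{k-1}^{(k-1)})^{\otimes a_k^{k-1}}$, and using $\pi_j\circ f_j=f_{j-1}\circ p_j$ repeatedly, one gets $f_{k-1}^*\gamma_\ell^{(k-1)}=(p_{\ell+1}\circ\dots\circ p_{k-1})^*f_\ell^*\gamma_\ell\cong(p_{\ell+1}\circ\dots\circ p_{k-1})^*L_\ell$ via $\widetilde f_\ell$; the last bundle is, explicitly, $((S^3)^{k-1}\times\C)/(S^1)^{k-1}$ with $t$ acting on the $\C$-factor by $t_\ell^{-1}$. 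Therefore
\[
f_{k-1}^*\xi_k\cong\bigotimes_{\ell=1}^{k-1}\bigl((p_{\ell+1}\circ\dots\circ p_{k-1})^*L_\ell\bigr)^{\otimes a_k^\ell}=L_{\rho_k},
\]
the last equality being immediate from the explicit formulas together with $\rho_k(t)=\prod_{\ell}t_\ell^{a_k^\ell}$. Hence $f_{k-1}^*B_k=P\bigl(f_{k-1}^*(\underline{\C}\oplus\xi_k)\bigr)\cong P(\underline{\C}\oplus L_{\rho_k})\cong M_k$ as bundles over $M_{k-1}$; composing the resulting diffeomorphism $M_k\to f_{k-1}^*B_k$ with the canonical map $f_{k-1}^*B_k\to B_k$ (a diffeomorphism, since $f_{k-1}$ is one) defines $f_k\co M_k\to B_k$ with $\pi_k\circ f_k=f_{k-1}\circ p_k$.

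For the ``moreover'' statement I would invoke naturality of the tautological line bundle: under the projectivization isomorphism induced by $\underline{\C}\oplus L_{\rho_k}\cong f_{k-1}^*(\underline{\C}\oplus\xi_k)$, the tautological bundle $\gamma_k$ of $B_k$ pulls back to the tautological bundle of $P(\underline{\C}\oplus L_{\rho_k})$, so it remains only to identify, through the isomorphism $M_k\cong P(\underline{\C}\oplus L_{\rho_k})$ of the second paragraph, the pullback of the latter to $M_k$ with $L_k$. I would check this directly: parametrizing the pulled-back tautological bundle by $(S^3)^k\times\C$ via $\bigl(((z_i,w_i))_{i\le k},\mu\bigr)\mapsto\mu(z_k,w_k)$ and chasing the $(S^1)^k$-equivariance — the twist by $\rho_k(t)$ cancels once one returns to the chosen representative of the base point, because $\rho_k$ ignores $t_k$ — shows that $t$ acts on the $\C$-factor by $t_k^{-1}$, i.e., the pullback is $L_k$. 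This isomorphism covers $f_k$, giving $\widetilde f_k\co L_k\to\gamma_k$ and completing the induction. I expect the two equivariance computations — the identification $M_k\cong P(\underline{\C}\oplus L_{\rho_k})$ and, more delicately, the identification of the pulled-back tautological bundle with $L_k$ — to be the only real work; everything else is formal once the inductive data are available.
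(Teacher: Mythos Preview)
Your proof is correct and follows essentially the same route as the paper: both argue by induction, identify $f_{k-1}^*\xi_k\cong L_{\rho_k}$ via the inductive line-bundle isomorphisms, deduce $M_k\cong P(\underline{\C}\oplus L_{\rho_k})\cong B_k$ over $M_{k-1}$, and then identify $L_k$ with the tautological bundle of $P(\underline{\C}\oplus L_{\rho_k})$ using the parametrization $\lambda\mapsto(\lambda z_k,\lambda w_k)$. The only cosmetic differences are that the paper starts the induction at $k=1$ with an explicit $\C P^1$ computation and writes the tautological identification as a single explicit $(S^1)^k$-equivariant map $(S^3)^k\times\C\to(S^3)^k\times\C^2$, whereas you separate the step $M_k\cong P(\underline{\C}\oplus L_{\rho_k})$ before pulling back $\gamma_k$.
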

	\begin{proof}
		Induction on $k$. Consider the case when $k=1$. Since $\xi_1$ is a line bundle over a point, we have that $\xi_1$ is the product line bundle and $a_1^{(0)} =0$. Therefore $\rho_1(t) =1$ and hence $B_1$ and $M_1$ are nothing but $\C P^1$. 
		
		The tautological line bundle $\gamma$ of $\C P^1$ is 
		\begin{equation*}
			\gamma = \{ ([z,w], (u,v)) \in \C P^1 \times \C^2 \mid (u,v) \in [z,w]\}.
		\end{equation*}
		The smooth map $S^3 \times \C \to S^3 \times \C^2$ given by
			$((z,w), \lambda) \mapsto ((z,w), (\lambda z, \lambda w))$
		induces the isomorphism $\widetilde{f}_1$ of line bundles between $L_1 \to M_1 = B_1$ and $\gamma \to B_1$. 
		
		Suppose that the proposition holds for $k-1$. For $\ell =1,\dots, k-1$, let $L_\ell'$ be the pull-back line bundle $(p_{k-1} \circ \dots \circ p_{\ell+1})^*L_\ell$. By the induction hypothesis, $L_\ell'$ is isomorphic to $\gamma_\ell^{(k-1)}$. Since $\xi_{k}$ is isomorphic to $(\gamma_1^{(k-1)})^{\otimes a_k^1} \otimes \dots \otimes (\gamma_{k-1}^{(k-1)})^{\otimes a_k^{k-1}}$, we have that
		\begin{equation*}
			f_{k-1}^*(\xi_k) \cong (L_1')^{\otimes a_k^1} \otimes \dots \otimes (L_{k-1}')^{\otimes a_k^{k-1}}. 
		\end{equation*}
		On the other hand, $(L_1')^{\otimes a_k^1} \otimes \dots \otimes (L_{k-1}')^{\otimes a_k^{k-1}}$ is isomorphic to $L_{\rho_k}$ as line bundles over $M_{k-1}$. Therefore there exists a line bundle isomorphism $g_k \co L_{\rho_k} \to \xi_k$ that induces $f_{k-1}$. The vector bundle isomorphism between $\underline{\C}\oplus L_{\rho_k}$ and $\underline{\C}\oplus \xi_k$ induces the diffeomorphism $f_k \co M_k \to B_k$ such that $\pi_k \circ f_k = f_{k-1}\circ p_k$. 
		The smooth map $(S^3)^k \times \C \to (S^3)^k \times \C^2$ given by
		\begin{equation*}
			((z_1,w_1), \dots , (z_k,w_k), \lambda) \mapsto ((z_1,w_1), \dots , (z_k,w_k), (\lambda z_k, \lambda w_k))
		\end{equation*}
		induces the isomorphism between $L_k$ and the tautological line bundle of $P(\underline{\C}\oplus L_{\rho_k}) = M_k$. Remark that $f_k^*(\gamma_k)$ is isomorphic to the tautological line bundle of $M_k$. By composing, we have that there exists a line bundle isomorphism $\widetilde f_k \co L_k \to \gamma_k$ which induces $f_k$. 
		
		The proposition is proved. 
	\end{proof}
	\begin{theo}[{\cite[Theorem 3.1]{Ishida2012}}]\label{theo:decomposable}
		Rank $2$ decomposable vector bundles over a Bott manifold are distinguished by their total Chern classes. 
	\end{theo}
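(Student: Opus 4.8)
The plan is to prove the theorem by induction on the height $n$, after a reduction that isolates the essential point. Since the forward implication (isomorphic bundles have equal total Chern class) is trivial, I focus on the converse.

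First I would reduce to a statement about sections. Given decomposable rank $2$ bundles $E = L_1\oplus L_2$ and $E' = L_1'\oplus L_2'$ over $B_n$ with $c(E) = c(E')$, tensor both by $L_1^{-1}$: this keeps the bundles decomposable of rank $2$, and since for bundles of a fixed rank $V\mapsto c(V\otimes L)$ is invertible as a function of $c(V)$ (for rank $2$: $c_1(V\otimes L)=c_1(V)+2c_1(L)$, $c_2(V\otimes L)=c_2(V)+c_1(V)c_1(L)+c_1(L)^2$), the hypothesis $c(E\otimes L_1^{-1}) = c(E'\otimes L_1^{-1})$ still holds. As $E\otimes L_1^{-1} = \underline{\C}\oplus(L_2\otimes L_1^{-1})$, this forces $c_2(E'\otimes L_1^{-1}) = 0$. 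So it suffices to prove: \emph{a decomposable rank $2$ bundle $F = N_1\oplus N_2$ over $B_n$ with $c_2(F)=0$ has a nowhere‑vanishing section} (equivalently $F\cong\underline{\C}\oplus\det F$); applying this to $E'\otimes L_1^{-1}$ gives $E'\otimes L_1^{-1}\cong\underline{\C}\oplus(L_2\otimes L_1^{-1}) = E\otimes L_1^{-1}$, hence $E\cong E'$. As orientation: since $H^*(B_n)$ — and therefore $K^0(B_n)$ — is torsion free, and $(e^{a}-1)(e^{b}-1)=0$ in $H^*(B_n)$ whenever $ab=0$ (each term $a^ib^j$, $i,j\ge1$, is a multiple of $ab$), the hypothesis $c_2(F)=c_1(N_1)c_1(N_2)=0$ already forces $([N_1]-1)([N_2]-1)=0$, i.e. $[F]=[\underline{\C}\oplus\det F]$ in $K^0(B_n)$; the theorem is the upgrade of this stable isomorphism to a genuine one.

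Now I would induct on $n$. The cases $n\le 1$ are immediate: over a point everything is trivial, and over $B_1=\C P^1=S^2$ every complex bundle is classified by $c_1$, so $F$ splits off a trivial line bundle. For $n\ge 2$ use the last fibration $\pi=\pi_n\colon B_n\to B_{n-1}$ and the $H^*(B_{n-1})$‑module decomposition $H^*(B_n)=H^*(B_{n-1})\oplus H^*(B_{n-1})x$ with $x=x_n$ and $x^2=\alpha_n^{(n)}x$ from Theorem~\ref{theo:cohomology}, writing $\alpha=c_1(\xi_n)$ so that $\alpha_n^{(n)}=\pi^*\alpha$. Writing $c_1(N_i)=\pi^*\beta_i+m_i x$ and expanding $c_2(F)=c_1(N_1)c_1(N_2)$ in the basis $\{1,x\}$, the hypothesis $c_2(F)=0$ becomes the two identities $\beta_1\beta_2=0$ and $m_1\beta_2+m_2\beta_1+m_1m_2\alpha=0$ in $H^*(B_{n-1})$. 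If $m_1=0$ (symmetrically $m_2=0$) the second identity reads $m_2\beta_1=0$ in the torsion‑free ring $H^*(B_{n-1})$, so either $m_2=0$ — and then $F=\pi^*F_0$ for a decomposable rank $2$ bundle $F_0$ over $B_{n-1}$ with $c_2(F_0)=\beta_1\beta_2=0$, where the inductive hypothesis applies — or $\beta_1=0$, so $N_1$ is trivial and $F$ already has a nowhere‑vanishing section.

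The remaining case $m_1\ne 0\ne m_2$ is the crux, and I expect it to be the main obstacle. Over each fibre $\C P^1$ of $\pi$ the bundle $F$ restricts to a rank $2$ bundle over $S^2$, hence to $\underline{\C}\oplus(\text{line bundle})$; the problem is to globalize such fibrewise splittings over all of $B_n$. Note that a nowhere‑vanishing section of $N_1\oplus N_2$ is precisely a pair of sections of $N_1$ and $N_2$ with disjoint zero loci, so one must realize $c_1(N_1)$ and $c_1(N_2)$ by cocycles with disjoint supports; the relation $c_1(N_1)c_1(N_2)=0$ is exactly the necessary condition for this, and over a Bott manifold it should also be sufficient. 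I would prove sufficiency geometrically, using the $2n$ hypersurfaces $D_k^0,D_k^\infty\subset B_n$ obtained by pulling back to $B_n$ the two (disjoint) sections of $B_k\to B_{k-1}$ for $1\le k\le n$: these satisfy $[D_k^0]=-x_k+\alpha_k^{(n)}$, $[D_k^\infty]=-x_k$, and $D_k^0\cap D_k^\infty=\varnothing$, and together with the linear equivalences $[D_k^\infty]-[D_k^0]=-\alpha_k^{(n)}$ (which let one trade $x_k$ for lower generators) one moves $c_1(N_1)$ and $c_1(N_2)$ onto complementary members of a common pair $\{D_k^0,D_k^\infty\}$, the identity $c_1(N_1)c_1(N_2)=0$ being what makes this possible. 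An equivalent, more homotopy‑theoretic route is to run obstruction theory for the $S^3$‑bundle $S(F)\to B_n$ relative to $\pi$: the primary obstruction is $c_2(F)=0$, and the task is to show that the secondary obstructions — living in cohomology of $B_{n-1}$ with $\pi_{\ge 4}(S^3)$‑coefficients — vanish for a \emph{decomposable} $F$ over a \emph{Bott} base. Establishing this vanishing, i.e. showing that the special structure of $H^*(B_n)$ kills every obstruction left after $c_2(F)=0$, is the heart of the argument.
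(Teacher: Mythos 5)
This theorem is not proved in the paper at all: it is imported as a black box from \cite[Theorem 3.1]{Ishida2012}, so there is no in-paper argument to measure yours against. Judged on its own terms, your write-up has a genuine gap. The reduction to the claim ``$c_2(N_1\oplus N_2)=0$ implies $N_1\oplus N_2\cong\underline{\C}\oplus(N_1\otimes N_2)$'', the $K$-theoretic remark, the base cases $n\le 1$, and the inductive case where $m_1=0$ or $m_2=0$ are all correct but are essentially bookkeeping. The case $m_1\neq 0\neq m_2$, which you yourself label ``the heart of the argument,'' is exactly where the content of the theorem lives, and you do not prove it: for $n\ge 3$ the vanishing of $c_2$ kills only the primary obstruction to a nowhere-zero section of the rank-$2$ bundle, and the higher obstructions in $H^{k+1}(B_n;\pi_k(S^3))$ (with $\pi_4(S^3)=\pi_5(S^3)=\Z/2$, $\pi_6(S^3)=\Z/12$, \dots) have no reason to vanish for a general base, so the theorem genuinely requires the decomposability and the Bott-manifold structure. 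A proof that stops here has not proved the theorem.

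Moreover, both routes you sketch for the hard case are problematic as stated. The divisor route asserts that $c_1(N_1)$ and $c_1(N_2)$ can be ``moved onto complementary members of a common pair $\{D_k^0,D_k^\infty\}$''; this is false at the level of cohomology classes (already over $B_2=\C P^1\times\C P^1$ the classes $u=2x_1+3x_2$ and $v=3x_1-2x_2$ satisfy $uv=0$ while neither is a multiple of any $x_k$ or $x_k-\alpha_k^{(2)}$), and in any case $c_1(N_1)c_1(N_2)=0$ only says that the codimension-$4$ intersection of two generic zero divisors is null-homologous, not empty, so disjointness is precisely what must be engineered rather than what follows. The obstruction-theoretic route correctly names the secondary obstructions but computes none of them. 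To close the gap you would need an actual mechanism --- for instance, exploiting the relation $m_1\beta_2+m_2\beta_1+m_1m_2\alpha=0$ to show that $c_1(N_1)$ and $c_1(N_2)$ are integer multiples of two classes $\mu=dx_n+\pi^*\delta$ and $\nu=d(x_n-\pi^*\alpha)-\pi^*\delta$ with $\mu\nu=0$, and then constructing the nowhere-vanishing section explicitly in the quotient model $(S^3)^n/(S^1)^n$ of Proposition \ref{prop:quotientconstruction} --- which is the substance of the cited proof and is absent here.
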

	Let 
	\begin{equation*}
		\begin{tikzcd}
			B_\bullet' \co  B_n' \arrow[r, "\pi_n'"] & B_{n-1}' \ar[r, "\pi_{n-1}'"] & \cdots \ar[r, "\pi_2'"] & B_1' \ar[r, "\pi_1'"] & B_0' = \{\text{a point}\}
		\end{tikzcd}
	\end{equation*}
	be another Bott tower of height $n$ such that $\pi_j' \co B_j' \to B_{j-1}'$ is a projectivization $P(\underline{\C} \oplus \xi') \to B_{j-1}'$, where $\xi_j'$ is a complex line bundle over $B_{j-1}'$. Let $x_1',\dots, x_n'$ be the standard generators of $H^*(B_n')$ with respect to $\xi_1',\dots, \xi_n'$. 
	\begin{theo}[{\cite[Theorem 1.1]{Ishida2012}}]\label{theo:uppertriangular}
		Let $\varphi \co H^*(B_n) \to H^*(B_n')$ be an isomorphism as graded algebras. Assume that the representation matrix of $\varphi$ with respect to $x_1,\dots, x_n$ and $x_1',\dots, x_n'$ is an upper triangular matrix. Then, there exists a diffeomorphism $f_k \co B_k' \to B_k$ such that $\pi_k \circ f_k = f_{k-1}\circ \pi_k'$ for all $k$. 
	\end{theo}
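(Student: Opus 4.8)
The plan is to induct on $n$, establishing the slightly stronger statement that $f_k$ can be chosen with $f_k^* = \varphi_k$, where $\varphi_k \co H^*(B_k) \to H^*(B_k')$ is the isomorphism that $\varphi$ induces via the pullbacks along $B_n \to B_k$ and $B_n' \to B_k'$ (injective by \ref{lemm:Leray-Hirsch}). This $\varphi_k$ makes sense and is again upper-triangular: since the matrix of $\varphi$ is upper-triangular, $\varphi$ carries the subring generated by $x_1, \dots, x_k$ into the one generated by $x_1', \dots, x_k'$, and the top-left $k \times k$ block of the matrix is then upper-triangular with $\pm 1$ diagonal, hence invertible over $\Z$, so the restriction is an isomorphism (by \ref{theo:cohomology} these subrings are the pullbacks of $H^*(B_k)$ and $H^*(B_k')$). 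For $n = 1$ we have $B_1 = B_1' = \C P^1$, and $\varphi_1$, with matrix $(\pm 1)$, is realized by the identity or by complex conjugation of $\C P^1$.

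For the inductive step I would first extract the comparison of $\xi_n$ with $\xi_n'$ from the single quadratic relation $(x_n)^2 = (\pi_n^* c_1(\xi_n))\, x_n$ in $H^*(B_n)$. Write $\varphi(x_n) = \epsilon x_n' + u$ with $\epsilon = \pm 1$ (the $(n,n)$-entry of the matrix, necessarily a unit) and $u$ in the image of $(\pi_n')^*$. Comparing the two components of $\varphi(x_n)^2 = \varphi(\pi_n^* c_1(\xi_n))\,\varphi(x_n)$ in the free rank-two $(\pi_n')^* H^*(B_{n-1}')$-module $H^*(B_n') = (\pi_n')^* H^*(B_{n-1}') \oplus (\pi_n')^* H^*(B_{n-1}')\cdot x_n'$ (using $(x_n')^2 = ((\pi_n')^* c_1(\xi_n'))\, x_n'$) and invoking the inductive hypothesis $f_{n-1}^* = \varphi_{n-1}$, I obtain the identities in $H^2(B_{n-1}')$
\[
	f_{n-1}^* c_1(\xi_n) = \epsilon\, c_1(\xi_n') + 2u', \qquad (u')^2 = -\epsilon\, c_1(\xi_n')\, u',
\]
where $u' \in H^2(B_{n-1}')$ satisfies $(\pi_n')^* u' = u$. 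At this point I may assume $\epsilon = +1$: the orthogonal-complement involution of $B_n = P(\underline{\C}\oplus\xi_n)$ over $B_{n-1}$ is a diffeomorphism inducing $x_n \mapsto -x_n + \pi_n^* c_1(\xi_n)$ and fixing $x_1, \dots, x_{n-1}$ (the computation is the one in the proof of \ref{lemm:Leray-Hirsch}), so replacing $\varphi$ by its composite with the corresponding automorphism of $H^*(B_n)$ --- still upper-triangular, with the same $\varphi_{n-1}$ --- reduces to $\epsilon = +1$, and a compatible $f_n$ for the new $\varphi$ yields one for the original after composing with the involution.

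So assume $\epsilon = +1$, and let $L$ be the line bundle on $B_{n-1}'$ with $c_1(L) = -u'$. A short computation using the two displayed identities shows that $(\underline{\C}\oplus f_{n-1}^*\xi_n)\otimes L$ and $\underline{\C}\oplus\xi_n'$ --- both rank-two decomposable bundles over the Bott manifold $B_{n-1}'$ --- have the same total Chern class, hence are isomorphic by \ref{theo:decomposable}. By \ref{lemm:tensor} this gives an isomorphism $B_n' = P(\underline{\C}\oplus\xi_n') \cong P(\underline{\C}\oplus f_{n-1}^*\xi_n) = f_{n-1}^* B_n$ of bundles over $B_{n-1}'$; composing with the projection $f_{n-1}^* B_n \to B_n$ produces $f_n \co B_n' \to B_n$ with $\pi_n \circ f_n = f_{n-1}\circ \pi_n'$. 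Tracking the tautological line bundle through these identifications (tensoring a bundle by $L$ shifts the first Chern class of the tautological bundle of its projectivization by $(\pi_n')^* c_1(L)$) gives $f_n^* x_n = x_n' + u = \varphi(x_n)$, and $f_n^* x_i = \varphi(x_i)$ for $i < n$ because $f_n$ covers $f_{n-1}$; hence $f_n^* = \varphi$, which closes the induction.

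I expect the hard part to be exactly this promotion step together with the sign bookkeeping: one must check that the single relation $(x_n)^2 = (\pi_n^* c_1(\xi_n))\, x_n$ constrains $f_{n-1}^*\xi_n$ tightly enough, that the two displayed identities are precisely what forces the total Chern classes to agree after the twist by $L$, and --- the genuinely delicate point --- that the sign $\epsilon$ must be normalized via the orthogonal-complement involution, since no bundle isomorphism built from vector-bundle isomorphisms alone can make $f_n^* x_n$ have leading coefficient $-1$; all of this must be arranged so as to preserve the strengthened hypothesis $f_k^* = \varphi_k$, not merely the bare existence of a compatible diffeomorphism.
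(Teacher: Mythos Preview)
The paper does not actually prove this theorem: it is quoted verbatim from \cite[Theorem 1.1]{Ishida2012} and no argument is given here. So there is no in-paper proof to compare against; one can only check your proposal on its own merits and against the tools the paper imports from that reference.

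Your argument is correct. The induction with the strengthened hypothesis $f_k^* = \varphi_k$ is exactly what is needed, the derivation of
\[
f_{n-1}^* c_1(\xi_n) = \epsilon\, c_1(\xi_n') + 2u', \qquad (u')^2 = -\epsilon\, c_1(\xi_n')\, u'
\]
from the single relation $x_n^2 = c_1(\xi_n)x_n$ is accurate, and after normalizing $\epsilon = +1$ via the orthogonal-complement involution the twist by $L$ with $c_1(L) = -u'$ gives
\[
c\bigl((\underline{\C}\oplus f_{n-1}^*\xi_n)\otimes L\bigr) = (1-u')(1+u'+c_1(\xi_n')) = 1 + c_1(\xi_n') = c(\underline{\C}\oplus\xi_n'),
\]
so Theorem~\ref{theo:decomposable} and Lemma~\ref{lemm:tensor} produce the bundle isomorphism. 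Your tracking of the tautological class through the twist is also right: the identification $P(V)\cong P(V\otimes L)$ satisfies $\gamma_{V\otimes L}\cong \gamma_V\otimes\pi^*L$, which after inverting gives $f_n^*x_n = x_n' + u$ as claimed. This is the natural route using precisely the two results the paper imports from \cite{Ishida2012}, and it is the expected shape of the original proof.
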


\section{The Hirzebruch surfaces}\label{sec:Hirzebruch}
	Let $a \in \Z$. The Hirzebruch surface $\Sigma_a$ is the total space of the projectivization of the rank $2$ vector bundle over $\C P^1$: 
	\begin{equation*}
		\Sigma _a = P (\C \oplus \gamma^{\otimes a}) \to \C P^1.
	\end{equation*}
	In particular, $\Sigma_a$ is a Bott manifold of dimension $4$. 
	By Theorem \ref{theo:cohomology}, the cohomology algebra $H^*(\Sigma_a)$ is of the form
	\begin{equation*}
		H^*(\Sigma_a) = \Z [x_1, x_2]/(x_1^2, x_2(x_2-ax_1)). 
	\end{equation*}
	The strong cohomological rigidity for Hirzebruch surfaces is already shown in \cite{CM2012}. 
	In this section, we review it briefly and study the certain diffeomorphism of $\Sigma_a$. 
	
	First we shall review the classification of the diffeomorphism types of Hirzebruch surfaces given in \cite{Hirzebruch1951} briefly. Let $a, b \in \Z$. By direct computation the isomorphism type of the cohomology algebra $H^*(\Sigma_a)$ is determined by the parity of $a$. In case when $a$ is even, say $a=2k$, then we have that 
	\begin{equation*}
		\begin{split}
			\Sigma_{2k} &= P(\underline{\C} \oplus \gamma^{\otimes 2k}) \\
			&\cong P(\gamma^{\otimes (-k)} \otimes \gamma^{\otimes k}) \quad (\text{by Lemma \ref{lemm:tensor}})\\
			&\cong P(\underline{\C} \oplus \underline{\C}) \quad (\text{by Theorem \ref{theo:decomposable}})\\
			&= \Sigma_0 = \C P ^1 \times \C P^1.
		\end{split}
	\end{equation*}
	In case when $a$ is odd, say $a=2k+1$, then we have that
	\begin{equation*}
		\begin{split}
			\Sigma_{2k+1} &= P(\underline{\C} \oplus \gamma^{\otimes 2k+1}) \\
			&\cong P(\gamma^{\otimes (-k)} \otimes \gamma^{\otimes k+1}) \quad (\text{by Lemma \ref{lemm:tensor}})\\
			&\cong P(\underline{\C} \oplus \gamma) \quad (\text{by Theorem \ref{theo:decomposable}})\\
			&= \Sigma_1.
		\end{split}
	\end{equation*}
	Thus the diffeomorphism types of Hirzebruch surfaces are determined by the parity of $a$, and hence they are determined by the isomorphism types of the cohomologies. 
	
	We shall see the group of automorphisms of $H^*(\Sigma_a)$. Since $x_1$ and $x_2$ form a basis of $H^2(\Sigma_a)$ and $H^*(\Sigma_a)$ is generated by $x_1$ and $x_2$, the automorphism of $H^*(\Sigma_a)$ is determined by the images of $x_1$ and $x_2$. By direct computations one can see that there are $8$ automorphisms of $H^*(\Sigma_a)$.  Among them, there are $4$ diffeomorphisms whose representation matrices with respect to $x_1,x_2$ are the following upper triangular matrices
	\begin{equation*}
		\begin{pmatrix}
			1 & 0 \\
			0 & 1
		\end{pmatrix},
		\begin{pmatrix}
			-1 & 0 \\
			0 & -1
		\end{pmatrix},
		\begin{pmatrix}
			1 & a \\
			0 & -1
		\end{pmatrix},
		\begin{pmatrix}
			-1 & -a \\
			0 & 1
		\end{pmatrix}.
	\end{equation*}
	Remaining $4$ automorphisms have different forms by the parity of $a$:
	\begin{itemize}
		\item In case when $a$ is even, representation matrices are 
		\begin{equation*}
			\pm \begin{pmatrix}
				\frac{a}{2} & \frac{a^2}{4}-1\\
				-1 & -\frac{a}{2}
			\end{pmatrix}, 
			\pm \begin{pmatrix}
				\frac{a}{2} & \frac{a^2}{4}+1\\
				-1 & -\frac{a}{2}
			\end{pmatrix}. 
		\end{equation*}
		\item In case when $a$ is odd, representation matrices are 
		\begin{equation*}
			\pm \begin{pmatrix}
				a & \frac{a^2-1}{2}\\
				-2 & -a
			\end{pmatrix}, 
			\pm \begin{pmatrix}
				a & \frac{a^2+1}{2}\\
				-2 & -a
			\end{pmatrix}. 
		\end{equation*}
	\end{itemize}
	To show the strong cohomological rigidity for Hirzebruch surfaces, we need the following: for any automorphism $\varphi$ of $H^*(\Sigma_0)$ (respectively, $H^*(\Sigma_1)$), there exists a diffeomorphism $f$ of $\Sigma_0$ (respectively, $\Sigma_1$) such that $f^* = \varphi$. 
	
	For $\ell \in \C P^1$, we denote by $\ell^\perp \in \C P^1$ the orthogonal complement of $\ell$ in $\C ^2$. Let $\varphi \co H^*(\Sigma_0) \to H^*(\Sigma_0)$ be an isomorphism. The representation matrix of $\varphi$ with respect to $x_1$, $x_2$ is a signed permutation matrix and vice versa. A signed permutation matrix of size $2$ is the identity matrix or a multiplication of several $\begin{pmatrix} 0 & 1\\ 1 & 0 \end{pmatrix}$ and $\begin{pmatrix} -1 & 0 \\ 0 &1 \end{pmatrix}$. Therefore $\varphi$ is induced by the identity map or a composition of diffeomorphisms $(\ell_1, \ell_2) \mapsto (\ell_2, \ell_1)$ and $(\ell_1, \ell_2) \mapsto (\ell_1^\perp, \ell_2)$ for $(\ell_1,\ell_2) \in \Sigma_0 = \C P^1 \times \C P^1$. 
	
	Let $a = \pm 1$. We will construct a diffeomorphism of $\Sigma_a$ explicitly for all automorphism of $H^*(\Sigma_a)$. For a moment, we use column vector notation to represent elements in $\C ^2$.
	By Proposition \ref{prop:quotientconstruction}, $\Sigma_a$ is diffeomorphic to the quotient $(S^3) ^2 /(S^1)^2$ by the action of $(S^1)^2$ on $(S^3)^2$ given by
	\begin{equation*}
		\begin{pmatrix}
			t_1\\ t_2 
		\end{pmatrix} \cdot 
		(\begin{pmatrix}
			z_1 \\ w_1
		\end{pmatrix}, 
		\begin{pmatrix}
			z_2 \\ w_2
		\end{pmatrix})
		= (\begin{pmatrix}
			t_1z_1 \\ t_1w_1
		\end{pmatrix}, 
		\begin{pmatrix}
			t_2z_2 \\ t_1^{-a}t_2w_2
		\end{pmatrix}). 
	\end{equation*}
	We identify $\Sigma_a$ with the quotient $(S^3)^2/(S^1)^2$ via the diffeomorphism. We denote by $\begin{bmatrix}
		\begin{pmatrix}
			z_1 \\ w_1
		\end{pmatrix}, 
		\begin{pmatrix}
			z_2 \\ w_2
		\end{pmatrix}
		\end{bmatrix} \in \Sigma_a$ the equivalence class of $(\begin{pmatrix}
			z_1 \\ w_1
		\end{pmatrix}, 
		\begin{pmatrix}
			z_2 \\ w_2
		\end{pmatrix}) \in (S^3)^2$. 
	For a complex number $z$ and $k \in \Z$, we define
	\begin{equation*}
		z^{[k]} :=\begin{cases}
			z^k & \text{if $k>0$}, \\
			1 & \text{if $k=0$}, \\
			\overline{z}^{-k} & \text{if $k<0$}. 
		\end{cases}
	\end{equation*}
	Let $f \co \Sigma_a \to \Sigma_a$ be the map given by
	\begin{equation*}
		\begin{bmatrix}
		\begin{pmatrix}
			z_1 \\ w_1
		\end{pmatrix}, 
		\begin{pmatrix}
			z_2 \\ w_2
		\end{pmatrix}
		\end{bmatrix} \mapsto
		\begin{bmatrix}
		(|z_2|^4+|w_2|^4)^{-1/2}
		\begin{pmatrix}
			z_1z_2^{[-2a]} - \overline{w_1}w_2^{[-2a]} \\ w_1z_2^{[-2a]}+\overline{z_1}w_2^{[-2a]}
		\end{pmatrix}, 
		\begin{pmatrix}
			\overline{z_2} \\ w_2
		\end{pmatrix}
		\end{bmatrix}.
	\end{equation*}
	By direct computation we have that $f$ is well-defined. Moreover, $f$ admits a smooth inverse, because 
	\begin{equation*}(|z_2|^4+|w_2|^4)^{-1/2}
		\begin{pmatrix}
			z_1z_2^{[-2a]} - \overline{w_1}w_2^{[-2a]} \\ w_1z_2^{[-2a]}+\overline{z_1}w_2^{[-2a]}
		\end{pmatrix}
	\end{equation*}
	is the first column vector of the special unitary matrix
	\begin{equation*}
		\begin{pmatrix}
			z_1 & -\overline{w_1}\\
			w_1 & \overline{z_1}
		\end{pmatrix}
		(|z_2|^4+|w_2|^4)^{-1/2}
		\begin{pmatrix}
			z_2^{[-2a]} & -w_2^{[2a]}\\
			w_2^{[-2a]} & z_2^{[2a]}
		\end{pmatrix}.
	\end{equation*}
	We claim that $f^*(x_1) = -2ax_2+x_1$. To see this, we consider the pull-back $L$ of the tautological line bundle of $\C P^1$ by $\pi_2 \co \Sigma_a \to \C P^1$, that is, 
	\begin{equation*}
		L = \{ (\begin{bmatrix}
		\begin{pmatrix}
			z_1 \\ w_1
		\end{pmatrix}, 
		\begin{pmatrix}
			z_2 \\ w_2
		\end{pmatrix}
		\end{bmatrix}, \begin{pmatrix} u\\ v\end{pmatrix}) \in \Sigma_a \times \C^2 \mid {}^\exists \lambda \in \C \text{ such that } \begin{pmatrix} u \\ v\end{pmatrix} = \begin{pmatrix} \lambda z_1 \\ \lambda w_1 \end{pmatrix}\}. 
	\end{equation*}
	Then the pull-back $f^*(L)$ is 
	\begin{equation*}
		\{ (\begin{bmatrix}
		\begin{pmatrix}
			z_1 \\ w_1
		\end{pmatrix}, 
		\begin{pmatrix}
			z_2 \\ w_2
		\end{pmatrix}
		\end{bmatrix}, \begin{pmatrix} u\\ v\end{pmatrix}) \in \Sigma_a \times \C^2 \mid {}^\exists \lambda \in \C \text{ such that } \begin{pmatrix} u \\ v\end{pmatrix} = \begin{pmatrix} \lambda (z_1z_2^{[-2a]} - \overline{w_1}w_2^{[-2a]}) \\ \lambda (w_1z_2^{[-2a]}+\overline{z_1}w_2^{[-2a]}) \end{pmatrix}\}. 
	\end{equation*}
	On the other hand, $f^*(L)$ is isomorphic to $L_1' \otimes L_2^{\otimes (-2a)}$, where $L_1'$ and $L_2$ are defined as in the proof of Proposition \ref{prop:quotientconstruction}. In fact, the map $L_1' \otimes L_2^{\otimes (-2a)} \to f^*(L)$ given by
	\begin{equation*}
		\begin{bmatrix}
		\begin{pmatrix}
			z_1 \\ w_1
		\end{pmatrix}, 
		\begin{pmatrix}
			z_2 \\ w_2
		\end{pmatrix}, \lambda
		\end{bmatrix} \mapsto (\begin{bmatrix}
		\begin{pmatrix}
			z_1 \\ w_1
		\end{pmatrix}, 
		\begin{pmatrix}
			z_2 \\ w_2
		\end{pmatrix}
		\end{bmatrix}, \begin{pmatrix} \lambda (z_1z_2^{[-2a]} - \overline{w_1}w_2^{[-2a]}) \\ \lambda (w_1z_2^{[-2a]}+\overline{z_1}w_2^{[-2a]}) \end{pmatrix})
	\end{equation*} is a line bundle isomorphism. 
	By definition, $c_1(L) = x_1$. The argument above shows that $c_1(f^*(L)) = -2ax_2+x_1$. Therefore $f^*(x_1) = -2ax_2+x_1$. By the same argument we have that $f^*(x_2) = -x_2$. 
	Let $g_1 \co \Sigma_a \to \Sigma_a$ be the diffeomorphism given by
	\begin{equation*}
		\begin{bmatrix}
		\begin{pmatrix}
			z_1 \\ w_1
		\end{pmatrix}, 
		\begin{pmatrix}
			z_2 \\ w_2
		\end{pmatrix}
		\end{bmatrix} \mapsto
		\begin{bmatrix}
		\begin{pmatrix}
			z_1 \\ w_1
		\end{pmatrix}, 
		\begin{pmatrix}
			-\overline{w_2} \\ \overline{z_2}
		\end{pmatrix}
		\end{bmatrix}.
	\end{equation*}
	Then the representation matrix of $g_1^*$ is $\begin{pmatrix} 1 & a \\ 0 & -1 \end{pmatrix}$. 
	Let $g_2 \co \Sigma_a \to \Sigma_a$ be the diffeomorphism given by 
	\begin{equation*}
		\begin{bmatrix}
		\begin{pmatrix}
			z_1 \\ w_1
		\end{pmatrix}, 
		\begin{pmatrix}
			z_2 \\ w_2
		\end{pmatrix}
		\end{bmatrix} \mapsto
		\begin{bmatrix}
		\begin{pmatrix}
			-\overline{w_1} \\ \overline{z_1}
		\end{pmatrix}, 
		\begin{pmatrix}
			w_2 \\ z_2
		\end{pmatrix}
		\end{bmatrix}.
	\end{equation*}
	Then the representation matrix of $g_2^*$ is $\begin{pmatrix} -1 & -a \\ 0 & 1\end{pmatrix}$. 
	These computations show that the diffeomorphism $\varphi$ is induced by one of the identity map or a composition of several $f$, $g_1$ and $g_2$.

	Finally, we remark that $f$, $g_1$ and $g_2$ are equivariant with respect to the following $S^1$-action on $\Sigma_a$. We define the $S^1$-action on $\Sigma_a$ via 
	\begin{equation*}
		t \cdot 
		\begin{bmatrix}
		\begin{pmatrix}
			z_1 \\ w_1
		\end{pmatrix}, 
		\begin{pmatrix}
			z_2 \\ w_2
		\end{pmatrix}
		\end{bmatrix} \mapsto
		\begin{bmatrix}
		\begin{pmatrix}
			z_1 \\ t^2w_1
		\end{pmatrix}, 
		\begin{pmatrix}
			z_2 \\ t^{-a}w_2
		\end{pmatrix}
		\end{bmatrix}
	\end{equation*}
	for $t \in S^1$ and $\begin{bmatrix}
		\begin{pmatrix}
			z_1 \\ w_1
		\end{pmatrix}, 
		\begin{pmatrix}
			z_2 \\ w_2
		\end{pmatrix}
		\end{bmatrix} \in \Sigma_a$. 
	We shall state this as a lemma for later use. 
	\begin{lemm}\label{lemm:equivariantdiffeo}
		Let $a = \pm 1$.  For any automorphism $\varphi \co H^*(\Sigma_a) \to H^*(\Sigma_a)$, there exists an $S^1$-equivariant diffeomorphism $f \co \Sigma_a \to \Sigma_a$ such that $f^*  = \varphi$. 
	\end{lemm}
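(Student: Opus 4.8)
The plan is to assemble the lemma from the explicit diffeomorphisms constructed above in this section. Recall that, by the direct computation recorded earlier, $\mathrm{Aut}(H^*(\Sigma_a))$ has exactly eight elements, and that for $a = \pm 1$ each of them is realized as $\varphi = h^*$ where $h$ is the identity, one of $f$, $g_1$, $g_2$, or a finite composition of these. In fact, since $a^2 = 1$, the induced automorphisms $f^*$ and $g_1^*$ are involutions whose composition $f^* \circ g_1^*$ satisfies $(f^* \circ g_1^*)^2 = -\id$, so $\langle f^*, g_1^* \rangle$ is a dihedral group of order $8$ and hence equals $\mathrm{Aut}(H^*(\Sigma_a))$; thus it already suffices to treat $\id$, $f$, $g_1$, $g_2$. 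Consequently the whole argument reduces to two points: that $f$, $g_1$, $g_2$ are $S^1$-equivariant, and that the automorphisms realized by $S^1$-equivariant diffeomorphisms form a subgroup of $\mathrm{Aut}(H^*(\Sigma_a))$.

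The first task is to check that $f$, $g_1$, and $g_2$ each commute with the $S^1$-action on $\Sigma_a$ displayed just before the lemma. This is a direct verification on representatives in $(S^3)^2$. For $g_1$ and $g_2$ one pushes the scalars $t^{2}$ on $w_1$ and $t^{-a}$ on $w_2$ through the coordinate permutations and the conjugations $z \mapsto \overline{z}$, $w \mapsto \overline{w}$; the resulting point differs from the expected one only by a suitable element of $(S^1)^2$, so the two agree in the quotient $(S^3)^2/(S^1)^2 = \Sigma_a$. For $f$ one substitutes $(z_1, t^2 w_1)$ and $(z_2, t^{-a} w_2)$ into the defining formula, using that $(|z_2|^4 + |w_2|^4)^{-1/2}$ is $S^1$-invariant and that the twisted powers $z_2^{[-2a]}$, $w_2^{[-2a]}$ interact with the weight $t^{-a}$ exactly so that, after normalizing by $(S^1)^2$, the output is $t \cdot f([\,\cdot\,])$.

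Finally, let $G \subseteq \mathrm{Aut}(H^*(\Sigma_a))$ be the set of automorphisms of the form $h^*$ with $h$ an $S^1$-equivariant diffeomorphism of $\Sigma_a$. Since $S^1$-equivariant diffeomorphisms are closed under composition and under taking inverses, and $(h_1 \circ h_2)^* = h_2^* \circ h_1^*$, the set $G$ is a subgroup of $\mathrm{Aut}(H^*(\Sigma_a))$. By the previous two steps $G$ contains $\id^*$, $f^*$, $g_1^*$, and $g_2^*$, which generate $\mathrm{Aut}(H^*(\Sigma_a))$; hence $G = \mathrm{Aut}(H^*(\Sigma_a))$, and every automorphism $\varphi$ of $H^*(\Sigma_a)$ is of the form $f^*$ for an $S^1$-equivariant diffeomorphism $f$, which is the assertion of the lemma.

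The only point requiring genuine (though short) computation is the $S^1$-equivariance of $f$, since its defining formula is the most intricate one — involving the normalization $(|z_2|^4 + |w_2|^4)^{-1/2}$ and the twisted exponents $[-2a]$; the parallel checks for $g_1$ and $g_2$, and the verification that $f^*$ and $g_1^*$ already generate the eight-element automorphism group, are straightforward.
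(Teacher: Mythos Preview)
Your proposal is correct and follows essentially the same approach as the paper: the paper's argument for this lemma is precisely the material immediately preceding its statement, namely the observation that every automorphism of $H^*(\Sigma_a)$ is the identity or a composition of $f$, $g_1$, $g_2$, together with the remark that each of these three maps is equivariant for the displayed $S^1$-action. Your write-up is a bit more structured (making the subgroup argument explicit and noting that $f^*$ and $g_1^*$ already generate the dihedral group of order $8$), but the underlying proof is the same.
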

	\begin{rema}
		Let $a= \pm 1$. It is known that $\Sigma_a$ is diffeomorphic to the connected sum $\C P^2 \# \overline{\C P^2}$. Using certain involutions on $\C P^2$ and focusing on the connected sum, one can construct diffeomorphisms that induce all automorphisms of the cohomology of $\Sigma_a$, see \cite[Proof of Lemma 5.4]{CM2012}. For Lemma \ref{lemm:equivariantdiffeo}, we focus on the quotient construction (Proposition \ref{prop:quotientconstruction}) and construct the equivariant diffeomorphisms.
	\end{rema}

	\section{Hirzebruch surface bundles and algebra automorphisms}\label{sec:auto}
	Let 
	\begin{equation*}
		\begin{tikzcd}
			B_{n+2} \arrow[r, "\pi_{n+2}"] & B_{n+1} \arrow[r, "\pi_{n+1}"]  & B_n \arrow[r, "\pi_n"] & \cdots \ar[r, "\pi_2"] & B_1 \ar[r, "\pi_1"] & B_0 = \{\text{a point}\}.
		\end{tikzcd}
	\end{equation*}
	be a Bott tower of height $n+2$ such that $\pi_j \co B_j \to B_{j-1}$ is a projectivization $P(\underline{\C} \oplus \xi_j) \to B_{j-1}$, where $\xi_j$ is a complex line bundle over $B_{j-1}$. 
	We think of $H^*(B_n)$ and $H^*(B_{n+1})$ as subalgebras of $H^*(B_{n+2})$ via the injections $\pi_{n+1}^*$ and $\pi_{n+2}^*$. 
	
	By applying Lemma \ref{lemm:Leray-Hirsch} twice, we have that $H^*(B_{n+2})$ is freely generated by $x_{n+2} = c_1(\gamma_{n+2})$ and $x_{n+1} = c_1(\gamma_{n+1})$ as an $H^*(B_n)$-algebra, where $\gamma_{n+1}$ and $\gamma_{n+2}$ are tautological line bundles of $B_{n+1} = P(\underline{\C}\oplus \xi_{n+1})$ and $B_{n+2} = P(\underline{\C}\oplus \xi_{n+2})$, respectively. Suppose that $c_1(\xi_{n+2}) = ax_{n+1} + y$ for $a \in \Z$ and $y\in H^2(B_n)$. Then the composed bundle $B_{n+2} \to B_n$ is a fiber bundle with fiber $\Sigma_a$. Moreover, there is a natural isomorphism between $H^*(B_{n+2})/H^>(B_n)$ and $H^*(\Sigma_a)$. Let $\overline{x_1}$ and $\overline{x_2}$ be the image of $x_{n+1}$ and $x_{n+2}$ by the projection $H^2(B_{n+2}) \to H^2(\Sigma_a) \cong H^2(B_{n+2})/H^2(B_n)$, respectively. We study the condition of $a$, $c_1(\xi_{n+1})$ and $y$ for an automorphism $\varphi$ of $H^*(\Sigma_a)$ to extend to an algebra automorphism $\widetilde{\varphi}$ of $H^*(B_{n+2})$ as an $H^*(B_n)$-algebra. Remark that $\varphi$ and $\widetilde{\varphi}$ are determined by their restrictions to $H^2(\Sigma_a)$ and $H^2(B_{n+2})$, respectively. In the sequel, we suppose that $u_1, u_2 \in H^2(B_n)$ and $\widetilde{\varphi} \co H^2(B_{n+2}) \to H^2(B_{n+2})$ is a homomorphism which preserves elements in $H^2(B_n)$.

		\begin{itemize}
			\item[(0-i)] Suppose that the representation matrix of $\varphi$ with respect to $\overline{x_1}, \overline{x_2}$ is $\begin{pmatrix} 
					1 & 0\\ 
					0 & 1
				\end{pmatrix}$. Suppose that $\widetilde\varphi (x_{n+1}) = x_{n+1}+ u_1$ and $\widetilde\varphi(x_{n+2}) = x_{n+2}+ u_2$. 
				Then 
				\begin{equation*}
					\begin{split}
						& \widetilde\varphi(x_{n+1})\widetilde\varphi(x_{n+1}-c_1(\xi_{n+1}))\\
						&= 2x_{n+1}u_1 +u_1(u_1- c_1(\xi_{n+1}))
					\end{split}
				\end{equation*}
				and 
				\begin{equation*}
					\begin{split}
						& \widetilde\varphi (x_{n+2})\widetilde\varphi(x_{n+2}-ax_{n+1}-y)\\
						&= x_{n+2}(2u_2-au_1) -ax_{n+1}u_2 + u_2(u_2-au_1-y). 
					\end{split}
				\end{equation*}
				Therefore $\widetilde{\varphi}$ becomes an automorphism of $H^*(B_n)$ if and only if $u_1 = u_2 = 0$ because $H^*(B_{n+2})$ is freely generated by $1, x_{n+1}, x_{n+2}, x_{n+1}x_{n+2}$ as an $H^*(B_n)$-module.
				These computations show that $\varphi$ always uniquely extends to an automorphism of $H^*(B_{n+2})$ as an $H^*(B_n)$-algebra. 
				\item[(0-ii)] Suppose that the representation matrix of $\varphi$ with respect to $\overline{x_1}, \overline{x_2}$ is $\begin{pmatrix}
					1 & a\\
					0 & -1
				\end{pmatrix}$. Suppose that $\widetilde\varphi (x_{n+1}) = x_{n+1}+ u_1$ and $\widetilde\varphi(x_{n+2}) = -x_{n+2}+ax_{n+1}+u_2$. Then
				\begin{equation*}
					\begin{split}
						 & \widetilde\varphi(x_{n+1})\widetilde\varphi(x_{n+1}-c_1(\xi_{n+1}))\\
						&= 2x_{n+1}u_1+u_1(u_1-c_1(\xi_{n+1}))
					\end{split}
				\end{equation*}
				and 
				\begin{equation*}
					\begin{split}
						&\widetilde\varphi (x_{n+2})\widetilde\varphi(x_{n+2}-ax_{n+1}-y)\\
						&= x_{n+2}(au_1-2u_2+2y) + ax_{n+1}(u_2-au_1-y)+u_2(u_2-au_1-y).
					\end{split}
				\end{equation*}
				Therefore $\widetilde{\varphi}$ becomes an automorphism of $H^*(B_{n+2})$ if and only if $u_1=0$ and $u_2=y$. 
				
				These computations show that $\varphi$ always uniquely extends to an automorphism of $H^*(B_{n+2})$ as an $H^*(B_n)$-algebra. 
		\end{itemize}
		For other $6$ automorphisms of $H^*(\Sigma_a)$, we need to separate cases by the value of $a$. 
		\begin{enumerate}
			\item Suppose that $a=0$. 
			\begin{enumerate}
				\item Suppose that the representation matrix of $\varphi$ with respect to $\overline{x_1}, \overline{x_2}$ is $\begin{pmatrix}
					-1 & 0\\
					0 & -1
				\end{pmatrix}$. Suppose that $\widetilde\varphi (x_{n+1}) = -x_{n+1}+ u_1$ and $\widetilde\varphi(x_{n+2}) = -x_{n+2}+ u_2$. Then
				\begin{equation*}
					\begin{split}
						 &\widetilde\varphi(x_{n+1})\widetilde\varphi(x_{n+1}-c_1(\xi_{n+1}))\\
						&= x_{n+1}(2c_1(\xi_{n+1})-2u_1) +u_1(u_1-c_1(\xi_{n+1}))
					\end{split}
				\end{equation*}
				and
				\begin{equation*}
					\begin{split}
						 &\widetilde\varphi (x_{n+2})\widetilde\varphi(x_{n+2}-y)\\
						&= x_{n+2}(2y -2u_2) + u_2(u_2-y).
					\end{split}
				\end{equation*}
				Therefore $\widetilde{\varphi}$ becomes an automorphism of $H^*(B_{n+2})$ if and only if $u_1 = c_1(\xi_{n+1})$ and $u_2 = y$. 
				
				These computations show that, $\varphi$ always uniquely extends to an automorphism of $H^*(B_{n+2})$ as an $H^*(B_n)$-algebra.

				\item  Suppose that the representation matrix of $\varphi$ with respect to $\overline{x_1}, \overline{x_2}$ is $\begin{pmatrix}
					-1 & 0\\
					0 & 1
				\end{pmatrix}$. Then $\varphi$ always uniquely extends to an automorphism of $H^*(B_{n+2})$ as an $H^*(B_n)$-algebra because (0-ii), (1-i) and $\begin{pmatrix} 
					-1 & 0 \\
					0 & 1
				\end{pmatrix} = \begin{pmatrix}
				-1 & 0 \\
				0 & -1 \end{pmatrix}\begin{pmatrix}
				1 & 0\\
				0 & -1
				\end{pmatrix}$.
				\item Suppose that the representation matrix of $\varphi$ with respect to $\overline{x_1}, \overline{x_2}$ is $\begin{pmatrix}
					0 & -1\\
					-1 & 0
				\end{pmatrix}$. Suppose that $\widetilde\varphi (x_{n+1}) = -x_{n+2}+ u_1$ and $\widetilde\varphi(x_{n+2}) = -x_{n+1}+ u_2$. Then
				\begin{equation*}
					\begin{split}
						 & \widetilde\varphi(x_{n+1})\widetilde\varphi(x_{n+1}-c_1(\xi_{n+1}))\\
						&= x_{n+2}(y -2u_1+c_1(\xi_{n+1}))  +u_1(u_1-c_1(\xi_{n+1}))
					\end{split}
				\end{equation*}
				and
				\begin{equation*}
					\begin{split}
						&\widetilde\varphi (x_{n+2})\widetilde\varphi(x_{n+2}-y)\\
						&= x_{n+1}(c_1(\xi_{n+1}) -2u_2+y)+u_2(u_2-y). 
					\end{split}
				\end{equation*}
				Therefore $\widetilde{\varphi}$ becomes an automorphism of $H^*(B_{n+2})$ if and only if $u_1 = u_2 = (c_1(\xi_{n+1})+y)/2$ and $c_1(\xi_{n+1})^2=y^2$. 
				
				These computations show that, $\varphi$ uniquely extends to an automorphism of $H^*(B_{n+2})$ as an $H^*(B_n)$-algebra if and only if $c_1(\xi_{n+1})^2=y^2$ and $c_1(\xi_{n+1})\pm y$ is even. 
				
				\item Suppose that the representation matrix of $\varphi$ with respect to $\overline{x_1}, \overline{x_2}$ is $\begin{pmatrix}
					0 & 1\\
					1 & 0
				\end{pmatrix}$. 
				Then $\varphi$ uniquely extends to an automorphism of $H^*(B_{n+2})$ as an $H^*(B_n)$-algebra if and only if $c_1(\xi_{n+1})^2=y^2$ and $c_1(\xi_{n+1})\pm y$ is even because (1-i), (1-iii) and 
				\begin{equation*}
					\begin{pmatrix}
						0 & 1\\
						1 & 0
					\end{pmatrix}
					= \begin {pmatrix}
						0 & -1 \\
						-1 & 0
					\end{pmatrix}
					\begin{pmatrix}
						-1 & 0\\
						0 & -1
					\end{pmatrix}.
				\end{equation*}

				\item Suppose that the representation matrix of $\varphi$ with respect to $\overline{x_1}, \overline{x_2}$ is $\pm\begin{pmatrix}
					0 & 1\\
					-1 & 0
				\end{pmatrix}$. Then $ \varphi$ uniquely extends to an automorphism of $H^*(B_{n+2})$ as an $H^*(B_n)$-algebra if and only if  $c_1(\xi_{n+1})^2=y^2$  and $c_1(\xi_{n+1})\pm y$ is even because (0-ii), (1-iii), (1-iv) and 
				\begin{equation*}
					\pm\begin{pmatrix}
						0 & 1\\
						-1 & 0
					\end{pmatrix} = \pm\begin{pmatrix}
						0 & -1\\
						-1 & 0 \end{pmatrix}
					\begin{pmatrix}
						1 & 0\\
						0 & -1
					\end{pmatrix}. 
				\end{equation*}
			\end{enumerate}
			
			\item Suppose that $a$ is nonzero even.
				\begin{enumerate}
					\item Suppose that the representation matrix of $\varphi$ with respect to $\overline{x_1}, \overline{x_2}$ is $\begin{pmatrix}
						-1 & 0\\
						0 & -1
					\end{pmatrix}$. Suppose that $\widetilde\varphi (x_{n+1}) = -x_{n+1}+ u_1$ and $\widetilde\varphi(x_{n+2}) = -x_{n+2}+ u_2$. Then
					\begin{equation*}
						\begin{split}
							 & \widetilde\varphi(x_{n+1})\widetilde\varphi(x_{n+1}-c_1(\xi_{n+1}))\\
							&= x_{n+1}(2c_1(\xi_{n+1})-2u_1) +u_1(u_1-c_1(\xi_{n+1})
						\end{split}
					\end{equation*}
					and
					\begin{equation*}
						\begin{split}
							& \widetilde\varphi (x_{n+2})\widetilde\varphi(x_{n+2}-ax_{n+1}-y)\\
							&= x_{n+2}(2y -2u_2+au_1) +ax_{n+1}u_2 + u_2(u_2-au_1-y).
						\end{split}
					\end{equation*}
					
					Therefore $\widetilde{\varphi}$ becomes an automorphism of $H^*(B_{n+2})$ if and only if $u_1 = c_1(\xi_{n+1})$, $u_2 = 0$ and $y=-\frac{a}{2}c_1(\xi_{n+1})$. 
					
					These computations show that, $\varphi$ uniquely extends to an automorphism of $H^*(B_{n+2})$ as an $H^*(B_n)$-algebra if and only if $y = -\frac{a}{2}c_1(\xi_{n+1})$. 
					\item  Suppose that the representation matrix of $\varphi$ with respect to $\overline{x_1}, \overline{x_2}$ is $\begin{pmatrix}
						-1 & -a\\
						0 & 1
					\end{pmatrix}$. Then $\varphi$ uniquely extends to an automorphism of $H^*(B_{n+2})$ as an $H^*(B_n)$-algebra if and only if $y=-\frac{a}{2}c_1(\xi_{n+1})$  because (0-ii), (2-i) and 
					\begin{equation*}
					\begin{pmatrix} 
						-1 & -a \\
						0 & 1
					\end{pmatrix} = \begin{pmatrix}
					-1 & 0 \\
					0 & -1 \end{pmatrix}\begin{pmatrix}
					1 & a\\
					0 & -1
					\end{pmatrix}.
					\end{equation*}

					\item Suppose that the representation matrix of $\varphi$ with respect to $\overline{x_1}, \overline{x_2}$ is $\begin{pmatrix}
					\frac{a}{2} & \frac{a^2}{4}-1\\
					-1 & -\frac{a}{2}
				\end{pmatrix}$. 
				Suppose that $\widetilde\varphi (x_{n+1}) = -x_{n+2}+\frac{a}{2}x_{n+1}+ u_1$ and $\widetilde\varphi(x_{n+2}) = -\frac{a}{2}x_{n+2} + (\frac{a^2}{4}-1)x_{n+1}+ u_2$. Then
				\begin{equation*}
					\begin{split}
						&\widetilde\varphi(x_{n+1})\widetilde\varphi(x_{n+1}-c_1(\xi_{n+1}))\\
						&= x_{n+2}(y -2u_1+c_1(\xi_{n+1})) \\
						& \quad +ax_{n+1}(u_1 +\frac{a-2}{4}c_1(\xi_{n+1})) +u_1(u_1-c_1(\xi_{n+1}))
					\end{split}
				\end{equation*}
				and  
				\begin{equation*}
					\begin{split}
						& \widetilde\varphi (x_{n+2})\widetilde\varphi(x_{n+2}-ax_{n+1}-y)\\
						&= \frac{a}{4}x_{n+2}(2au_1 + (2-a) y)\\				
						&\quad + x_{n+1}((1-\frac{a^4}{16})c_1(\xi_{n+1})-2u_2+ (\frac{a^2}{4}-1)(-au_1-y))\\
						&\quad +u_2(u_2-au_1-y).
					\end{split}
				\end{equation*}
				Therefore $\widetilde{\varphi}$ becomes an automorphism of $H^*(B_{n+2})$ if and only if $u_1 = \frac{2-a}{4}c_1(\xi_{n+1})$,  $u_2 = \frac{4-a^2}{8}c_1(\xi_{n+1})$, $y=-\frac{a}{2}c_1(\xi_{n+1})$ and $(4-a^2)c_1(\xi_{n+1})^2=0$.

				These computations show that, $\varphi$ uniquely extends to an automorphism of $H^*(B_{n+2})$ as an $H^*(B_n)$-algebra if and only if $\frac{2\pm a}{4}c_1(\xi_{n+1})$ is integral, $y = -\frac{a}{2}c_1(\xi_{n+1})$ and $(4-a^2)c_1(\xi_{n+1})^2=0$.

				\item Suppose that the representation matrix of $\varphi$ with respect to $\overline{x_1}, \overline{x_2}$ is $\begin{pmatrix}
					-\frac{a}{2} & -\frac{a^2}{4}+1\\
					1 & \frac{a}{2}
				\end{pmatrix}$. 
				Suppose that $\widetilde{\varphi}(x_{n+1}) = x_{n+2}-\frac{a}{2}x_{n+1} +u_1$ and $\widetilde{\varphi}(x_{n+2}) = \frac{a}{2}x_{n+2} +(-\frac{a^2}{4}+1)x_{n+1}+u_2$.  
				Then
				\begin{equation*}
					\begin{split}
						&\widetilde\varphi(x_{n+1})\widetilde\varphi(x_{n+1}-c_1(\xi_{n+1}))\\
						&= x_{n+2}(y + 2u_1-c_1(\xi_{n+1})) \\
						& \quad +ax_{n+1}(- u_1 + \frac{2+a}{4}c_1(\xi_{n+1})) +u_1(u_1-c_1(\xi_{n+1}))
					\end{split}
				\end{equation*}
				and
				\begin{equation*}
					\begin{split}
						&\widetilde\varphi (x_{n+2})\widetilde\varphi(x_{n+2}-ax_{n+1}-y)\\
						&= -\frac{a}{4}x_{n+2}(2au_1+(2+a)y)\\
						&\quad + x_{n+1}(2u_2-(1-\frac{a^2}{4})(au_1+y) + (1-\frac{a^4}{16})c_1(\xi_{n+1}))\\
						&\quad +u_2(u_2 -au_1-y).
					\end{split}
				\end{equation*}
				Therefore $\widetilde{\varphi}$ becomes an automorphism of $H^*(B_{n+2})$ if and only if $u_1 = \frac{2+a}{4}c_1(\xi_{n+1})$, $u_2 = -\frac{4-a^2}{8}c_1(\xi_{n+1})$, $y=-\frac{a}{2}c_1(\xi_{n+1})$ and $(4-a^2)c_1(\xi_{n+1})^2=0$.
				
				These computations show that, $\varphi$ uniquely extends to an automorphism of $H^*(B_{n+2})$ as an $H^*(B_n)$-algebra if and only if $\frac{2\pm a}{4}c_1(\xi_{n+1})$ is integral, $y = -\frac{a}{2}c_1(\xi_{n+1})$ and $(4-a^2)c_1(\xi_{n+1})^2=0$. 
				
				\item Suppose that the representation matrix of $\varphi$ with respect to $\overline{x_1}, \overline{x_2}$ is $\pm\begin{pmatrix}
					\frac{a}{2} & \frac{a^2}{4}+1\\
					-1 & -\frac{a}{2}
				\end{pmatrix}$. Then $\varphi$ uniquely extends to an automorphism of $H^*(B_{n+2})$ as an $H^*(B_n)$-algebra if and only if $\frac{2\pm a}{4}c_1(\xi_{n+1})$ is integral, $y = -\frac{a}{2}c_1(\xi_{n+1})$ and $(4-a^2)c_1(\xi_{n+1})^2=0$ because (0-ii), (2-iii), (2-iv) and 
				\begin{equation*}
					\pm\begin{pmatrix}
						\frac{a}{2} & \frac{a^2}{4}+1\\
						-1 & -\frac{a}{2}
					\end{pmatrix} = \pm\begin{pmatrix}
						\frac{a}{2} & \frac{a^2}{4}-1\\
						-1 & -\frac{a}{2} \end{pmatrix}
					\begin{pmatrix}
						1 & a\\
						0 & -1
					\end{pmatrix}. 
				\end{equation*}
			\end{enumerate}

			\item Suppose that $a$ is odd. 
			\begin{enumerate}
				\item Suppose that the representation matrix of $\varphi$ with respect to $\overline{x_1}, \overline{x_2}$ is $\begin{pmatrix}
						-1 & 0\\
						0 & -1
					\end{pmatrix}$. 
					Suppose that $\widetilde\varphi (x_{n+1}) = -x_{n+1}+ u_1$ and $\widetilde\varphi(x_{n+2}) = -x_{n+2}+ u_2$. Then 
					\begin{equation*}
						\begin{split}
							&\widetilde\varphi(x_{n+1})\widetilde\varphi(x_{n+1}-c_1(\xi_{n+1}))\\
							&= x_{n+1}(2c_1(\xi_{n+1})-2u_1) +u_1^2 - u_1c_1(\xi_{n+1})
						\end{split}
					\end{equation*}
					and 
					\begin{equation*}
						\begin{split}
							&\widetilde\varphi (x_{n+2})\widetilde\varphi(x_{n+2}-ax_{n+1}-y)\\
							&= x_{n+2}(2y -2u_2+au_1) +ax_{n+1}u_2 + u_2(u_2-au_1-y).
						\end{split}
					\end{equation*}
					Therefore $\widetilde{\varphi}$ becomes an automorphism of $H^*(B_{n+2})$ if and only if $u_1 = c_1(\xi_{n+1})$, $u_2 = 0$ and $y=-\frac{a}{2}c_1(\xi_{n+1})$. 
					
					These computations show that, $\varphi$ uniquely extends to an automorphism of $H^*(B_{n+2})$ as an $H^*(B_n)$-algebra if and only if $c_1(\xi_{n+1})$ is even and $y = -\frac{a}{2}c_1(\xi_{n+1})$. 
					
					\item Suppose that the representation matrix of $\varphi$ with respect to $\overline{x_1}, \overline{x_2}$ is $\begin{pmatrix}
					-1 & -a\\
					0 & 1
				\end{pmatrix}$. Then $\varphi$ uniquely extends to an automorphism of $H^*(B_{n+2})$ as an $H^*(B_n)$-algebra if and only if  $c_1(\xi_{n+1})$ is even and $y = -\frac{a}{2}c_1(\xi_{n+1})$ because (0-ii), (3-i) and 
				\begin{equation*}
				\begin{pmatrix} 
					-1 & -a \\
					0 & 1
				\end{pmatrix} = \begin{pmatrix}
				-1 & 0 \\
				0 & -1 \end{pmatrix}\begin{pmatrix}
				1 & a\\
				0 & -1
				\end{pmatrix}.
				\end{equation*}
				
				\item Suppose that the representation matrix of $\varphi$ with respect to $\overline{x_1}, \overline{x_2}$ is $\begin{pmatrix}
					a & \frac{a^2-1}{2}\\
					-2 & -a
				\end{pmatrix}$. 
				Suppose that $\widetilde\varphi (x_{n+1}) = -2x_{n+2}+{a}x_{n+1}+ u_1$ and $\widetilde\varphi(x_{n+2}) = -{a}x_{n+2} + (\frac{a^2-1}{2})x_{n+1}+ u_2$. Then
				\begin{equation*}
					\begin{split}
						&\widetilde\varphi(x_{n+1})\widetilde\varphi(x_{n+1}-c_1(\xi_{n+1}))\\
						&= 2x_{n+2}(2y -2u_1+c_1(\xi_{n+1})) \\
						& \quad +ax_{n+1}(2u_1+(a-1) c_1(\xi_{n+1})) +u_1(u_1-c_1(\xi_{n+1}))
					\end{split}
				\end{equation*}
				and
				\begin{equation*}
					\begin{split}
						&\widetilde\varphi (x_{n+2})\widetilde\varphi(x_{n+2}-ax_{n+1}-y)\\
						&= ax_{n+2}(au_1+(1-a)y) \\
						&\quad + x_{n+1}(-u_2+(\frac{1-a^2}{2})(au_1+y)+\frac{1-a^4}{4}c_1(\xi_{n+1}))\\
						&\quad +u_2(u_2 -au_1-y).
					\end{split}
				\end{equation*}
				Therefore $\widetilde{\varphi}$ becomes an automorphism of $H^*(B_{n+2})$ if and only if $u_1 = \frac{1-a}{2}c_1(\xi_{n+1})$, $u_2 = \frac{1-a^2}{4}c_1(\xi_{n+1})$, $y=-\frac{a}{2}c_1(\xi_{n+1})$ and $(1-a^2)c_1(\xi_{n+1})^2=0$.
				
				These computations show that, $\varphi$ uniquely extends to an automorphism of $H^*(B_{n+2})$ as an $H^*(B_n)$-algebra if and only if $c_1(\xi_{n+1})$ is even, $y = -\frac{a}{2}c_1(\xi_{n+1})$ and $(1-a^2)c_1(\xi_{n+1})^2=0$. 				
				\item Suppose that the representation matrix of $\varphi$ with respect to $\overline{x_1}, \overline{x_2}$ is $\begin{pmatrix}
					-a & -\frac{a^2-1}{2}\\
					2 & a
				\end{pmatrix}$. 
				Suppose that $\widetilde\varphi (x_{n+1}) = 2x_{n+2}-{a}x_{n+1}+ u_1$ and $\widetilde\varphi(x_{n+2}) = {a}x_{n+2} - (\frac{a^2-1}{2})x_{n+1}+ u_2$. Then
				\begin{equation*}
					\begin{split}
						&\widetilde\varphi(x_{n+1})\widetilde\varphi(x_{n+1}-c_1(\xi_{n+1}))\\
						&= 2x_{n+2}(2y +2u_1-c_1(\xi_{n+1})) \\
						& \quad +ax_{n+1}(-2u_1 + (a+1)c_1(\xi_{n+1})) +u_1(u_1-c_1(\xi_{n+1}))
					\end{split}
				\end{equation*}
				and
				\begin{equation*}
					\begin{split}
						&\widetilde\varphi (x_{n+2})\widetilde\varphi(x_{n+2}-ax_{n+1}-y)\\
						&= -ax_{n+2}(au_1+(a+1)y)\\
						&\quad + x_{n+1}(u_2+\frac{a^2-1}{2}(au_1+y)+\frac{1-a^4}{4}c_1(\xi_{n+1}))\\
						&\quad +u_2(u_2 -au_1-y).
					\end{split}
				\end{equation*}
				Therefore $\widetilde{\varphi}$ becomes an automorphism of $H^*(B_{n+2})$ if and only if $u_1 = \frac{1+a}{2}c_1(\xi_{n+1})$, $u_2 = -\frac{1-a^2}{4}c_1(\xi_{n+1})$, $y= -\frac{a}{2}c_1(\xi_{n+1})$ and $(1-a^2)c_1(\xi_{n+1})^2=0$.
				
				These computations show that, $\varphi$ uniquely extends to an automorphism of $H^*(B_{n+2})$ as an $H^*(B_n)$-algebra if and only if $c_1(\xi_{n+1})$ is even, $y = -\frac{a}{2}c_1(\xi_{n+1})$ and $(1-a^2)c_1(\xi_{n+1})^2=0$. 
				
				\item Suppose that the representation matrix of $\varphi$ with respect to $\overline{x_1}, \overline{x_2}$ is $\pm\begin{pmatrix}
					a & \frac{a^2+1}{2}\\
					-2 & -a
				\end{pmatrix}$. Then $\varphi$ uniquely extends to an automorphism of $H^*(B_{n+2})$ as an $H^*(B_n)$-algebra if and only if $c_1(\xi_{n+1})$ is even, $y = -\frac{a}{2}c_1(\xi_{n+1})$ and $(1-a^2)c_1(\xi_{n+1})^2=0$ because (0-ii), (3-iii), (3-iv) and 
				\begin{equation*}
					\pm\begin{pmatrix}
					a & \frac{a^2+1}{2}\\
					-2 & -a
				\end{pmatrix} =\pm\begin{pmatrix}
					a & \frac{a^2-1}{2}\\
					-2 & -a
				\end{pmatrix}
					\begin{pmatrix}
						1 & a\\
						0 & -1
					\end{pmatrix}. 
				\end{equation*}
			\end{enumerate}
		\end{enumerate}
		\section{Realizing bundle automorphisms} \label{sec:realizing}
		We use the same notations as the previous section. In this section, we show that for any automorphism $\widetilde{\varphi} \co H^*(B_{n+2}) \to H^*(B_{n+2})$ as an $H^*(B_n)$-algebra there exists a bundle automorphism $\widetilde{f} \co B_{n+2} \to B_{n+2}$ over $B_n$ such that $\widetilde{f}^* = \widetilde{\varphi}$. By Theorem \ref{theo:uppertriangular} we know that such $\widetilde{f}$ exists if the representation matrix of the descent homomorphism $\varphi \co H^2(\Sigma_a) \to H^2(\Sigma_a)$ is upper triangular. In the sequel of this section we assume that the representation matrix of $\varphi$ is not upper triangular. 
		For a cohomology class $\alpha$ of degree $2$, we denote by $\gamma_k^\alpha$ a line bundle over $B_k$ such that $c_1(\gamma^\alpha_k) = \alpha$. 
		\begin{enumerate}
			\item Suppose that $a=0$. Then $\xi_{n+2}$ is isomorphic to the pull-back of a line bundle over $B_n$ because $c_1(\xi_{n+2}) = y \in H^2(B_n)$. Thus we may assume that $\xi_{n+2} = \pi_{n+1}^*\gamma_n^{y}$. Then the $\C P^1$-bundle $B_{n+2} \to B_{n+1}$ is the pull-back of $\pi_{n+1}' \co P(\underline{\C} \oplus \gamma_n^y) \to B_n$ by the $\C P^1$-bundle $B_{n+1} \to B_n$. Therefore 
			\begin{equation*}
				B_{n+2} = \{ (\ell_1,\ell_2) \in P(\underline{\C}\oplus \xi_{n+1})\times P(\underline{\C}\oplus \gamma_n^y) \mid \pi_{n+1}(\ell_1) = \pi_{n+1}' (\ell_2)\}.
			\end{equation*}
			By (1-iii), (1-iv) and (1-v) in Section \ref{sec:auto}, we have that $c_1(\xi_{n+1})^2 = y^2$ and $c_1(\xi_{n+1}) \pm y$ is even. Thus we have 
			\begin{equation*}
				\begin{split}
					P(\underline{\C}\oplus \gamma^y_n) 
					&\cong P(\gamma_n^{\frac{1}{2}(c_1(\xi_{n+1})-y)} \oplus \gamma_n^{\frac{1}{2}(c_1(\xi_{n+1})+y)})\\
					&\cong P(\underline{\C}\oplus \xi_{n+1})
				\end{split}
			\end{equation*}
			by Lemma \ref{lemm:tensor} and Theorem \ref{theo:decomposable}.
			Through this bundle isomorphism, $\widetilde{\varphi}$ is induced by one of the maps given by 
			$(\ell_1,\ell_2) \mapsto (\ell_2,\ell_1)$, $(\ell_1,\ell_2) \mapsto (\ell_2^\perp,\ell_1)$, $(\ell_1,\ell_2) \mapsto (\ell_2,\ell_1^\perp)$ or $(\ell_1,\ell_2) \mapsto (\ell_2^\perp,\ell_1^\perp)$. 
			\item Suppose that $a$ is nonzero even. By (2-iii), (2-iv) and (2-v) in Section \ref{sec:auto}, we have that $y = -\frac{a}{2}c_1(\xi_{n+1})$. Since $c_1(\xi_{n+2}) = ax_{n+1} + y$ and $x_{n+1}(x_{n+1}-c_1(\xi_{n+1}))=0$, we have
			\begin{equation*}
				\begin{split}
					P(\underline{\C}\oplus \xi_{n+2}) &\cong P(\underline{\C}\oplus \gamma_{n+1}^{ax_{n+1}+y})\\
					&\cong P(\gamma_{n+1}^{-\frac{a}{2}x_{n+1}} \oplus \gamma_{n+1}^{\frac{a}{2}(x_{n+1}-c_1(\xi_{n+1}))})\\
					&\cong P(\underline{\C} \oplus \gamma_{n+1}^{-\frac{a}{2}c_1(\xi_{n+1})})
				\end{split}
			\end{equation*}
			as bundles over $B_{n+1}$ by Lemma \ref{lemm:tensor} and Theorem \ref{theo:decomposable}. 
			Let $B_{n+2}' = P(\underline{\C} \oplus \gamma_{n+1}^{-\frac{a}{2}c_1(\xi_1)})$. 
			Let $\widetilde{g} \co  B_{n+2} \to B_{n+2}'$ be the bundle isomorphism. Then the composition $(\widetilde{g}^{-1})^* \circ \widetilde{\varphi} \circ \widetilde{g}^* \co H^*(B_{n+2}') \to H^*(B_{n+2}')$ is an automorphism as an $H^*(B_n)$-algebra. 
			This together with (1) yields that $(\widetilde{g}^{-1})^* \circ \widetilde{\varphi} \circ \widetilde{g}^*$ is induced by a bundle automorphism $\widetilde{f'} \co B_{n+2}' \to B_{n+2}'$. Therefore $\widetilde{\varphi}$ is induced by a bundle automorphism $\widetilde{g}^{-1} \circ \widetilde{f'} \circ \widetilde{g}$. 
			
			\item Suppose that $a$ is odd and $a\neq \pm 1$. By (3-iii), (3-iv) and (3-v) in Section \ref{sec:auto}, we have that $c_1(\xi_{n+1})$ is even and $c_1(\xi_{n+1})^2=0$. Thus we have 
			\begin{equation*}
				\begin{split}
					B_{n+1} &= P(\underline{\C} \oplus \xi_{n+1})\\
					 & \cong P(\underline{\C} \oplus \gamma_n^{c_1(\xi_{n+1})})\\
					& \cong P(\gamma_n^{-\frac{1}{2}c_1(\xi_{n+1})} \oplus \gamma_n^{\frac{1}{2}c_1(\xi_{n+1})})\\
					& \cong P(\underline{\C} \oplus \underline{\C})
				\end{split}
			\end{equation*}
			as bundles over $B_n$  by Lemma \ref{lemm:tensor} and Theorem \ref{theo:decomposable}. Let $\pi_{n+1}' \co B_{n+1}' \to B_n$ be the product $\C P^1$-bundle over $B_n$. Let $\widetilde{h} \co B_{n+1}' \to B_{n+1}$ be the bundle isomorphism. Let $\pi_{n+2}' \co B_{n+2}' \to B_{n+1}'$ be the pull-back of $\pi_{n+2} \co B_{n+2} \to B_{n+1}$ by $\widetilde{h}$
			and $\widetilde{g} \co B_{n+2}' \to B_{n+2}$ the bundle isomorphism induced by $\widetilde{h}$. Then the composition $\widetilde{g}^* \circ \widetilde{\varphi} \circ (\widetilde{g}^{-1})^* \co H^*(B_{n+2}') \to H^*(B_{n+2}')$ is an automorphism as an $H^*(B_n)$-algebra. Let $x_{n+1}', x_{n+2}'$ be the first Chern classes of the tautological line bundles of $B_{n+1}'$, $B_{n+2}'$, respectively. 
			Let $\overline{x_{n+1}'}$ and $\overline{x_{n+2}'}$ be the image of $x_{n+1}'$ and $x_{n+2}'$ by the projection $H^2(B_{n+2}') \to H^2(B_{n+2}')/H^2(B_n)$, respectively.
			Since the representation matrix of $\widetilde{g}^* \co H^2(B_{n+2}) \to H^2(B_{n+2}')$ with respect to $x_1,\dots, x_n, x_{n+1}, x_{n+2}$ and $x_1,\dots, x_n, x_{n+1}', x_{n+2}'$ is upper triangular, we have that the descent homomorphism of $\widetilde{g}^* \circ \widetilde{\varphi} \circ (\widetilde{g}^{-1})^*$ with respect to $\overline{x_{n+1}'}, \overline{x_{n+2}'}$ is not upper triangular. 
			By (3-iii), (3-iv), (3-v) and $B_{n+1}' = P(\underline{\C}\oplus \underline{\C})$, we have that there exists $a' \in \Z$ such that $\widetilde{h}^*(c_1(\xi_{n+2})) = a'x_{n+1}'$ . Therefore $\pi_{n+1}' \circ \pi_{n+2}' \co B_{n+2}' \to B_n$ is a trivial bundle with fiber $\Sigma_{a'}$ over $B_n$. Since the Hirzebruch surface $\Sigma_{a'}$ is strongly cohomological rigid as we saw in Section \ref{sec:Hirzebruch}, we have that $\widetilde{g}^* \circ \widetilde{\varphi} \circ (\widetilde{g}^{-1})^*$ is induced by a bundle automorphism $\widetilde{f'}$ of $B_{n+2}' \to B_n$.  Thus $\widetilde{\varphi}$ is induced by the bundle automorphism $\widetilde{g} \circ \widetilde{f'} \circ \widetilde{g}^{-1}$. 
			\item Suppose that $a = \pm 1$. By Lemma \ref{lemm:equivariantdiffeo}, there exists an $S^1$-equivariant diffeomorphism $f$ of $\Sigma_a$ such that $f^* = \varphi$. 
			By (3-iii), (3-iv) and (3-v) in Section \ref{sec:auto}, we have that $c_1(\xi_{n+1})$ is even and $y= -\frac{a}{2}c_1(\xi_{n+1})$. 
			Let $p \co \eta \to B_n$ be a complex line bundle over $B_n$ with a Hermitian metric such that $c_1(\eta) = \frac{1}{2}c_1(\xi_{n+1})$. Let $\{h_\alpha \co p^{-1}(U_\alpha) \to U_\alpha \times \C\}$ be a local trivialization such that each $h_\alpha$ preserves the length of vectors. 
			Let $\{\phi_{\alpha\beta} \co U_{\alpha\beta} \to S^1\}$ be the transition functions of $\eta$ with respect to the open covering $\{U_\alpha\}$. Namely, $h_\alpha\circ h_\beta^{-1} (x,v) = (x, \phi_{\alpha\beta}(x)v)$ for $(x,v) \in U_{\alpha\beta} \times \C$, where $U_{\alpha\beta} = U_{\alpha}\cap U_{\beta}$. 
			Since $c_1(\eta) = \frac{1}{2}c_1(\xi_{n+1})$ we may assume that $\xi_{n+1} = \eta^{\otimes 2}$ and $\xi_{n+2} = \pi_{n+1}^*\eta^{\otimes (-a)} \otimes \gamma_{n+2}^{\otimes a}$. Let $p_2 \co S(\underline{\C} \oplus \xi_{n+2}) \to B_{n+1}$ be the unit sphere bundle. Then $p_2$ is decomposed into the principal $S^1$-bundle $q_2 \co S(\underline{\C} \oplus \xi_{n+2}) \to B_{n+2}$ and $\pi_{n+2} \co B_{n+2} \to B_{n+1}$. Let $p_1 \co S(\underline{\C}\oplus \xi_{n+1}) \to B_n$ be the unit sphere bundle. Then $p_1$ is decomposed into the principal $S^1$-bundle $q_1 \co S(\underline{\C} \oplus \xi_{n+1}) \to B_{n+1}$ and $\pi_{n+1} \co B_{n+1} \to B_{n}$. 
			\begin{equation*}
				\begin{tikzcd}[ampersand replacement = \&]
					S(\underline{\C} \oplus q_1^*\xi_{n+2}) \arrow[d] \arrow[dr] \& \& \\
					S(\underline{\C} \oplus \xi_{n+2}) \arrow[d , "q_2"] \arrow[dr, "p_2"]\& S(\underline{\C} \oplus \xi_{n+1}) \arrow[d, "q_1"] \arrow[dr, "p_2"] .\& \\
					  B_{n+2} \arrow[r, "\pi_{n+2}"] \& B_{n+1} \arrow[r, "\pi_{n+1}"] \& B_n
				\end{tikzcd}
			\end{equation*}
			The pull-back of the $S^3$-bundle $p_2 \co S(\underline{\C} \oplus \xi_{n+2}) \to B_{n+1}$ by $q_1$ is the   unit sphere bundle $S(\underline{\C} \oplus q_1^*\xi_{n+1})$. The composition $S(\underline{\C}\oplus q_1^*\xi_{n+2}) \to B_n$ is the fiber product of the unit sphere bundles $S(\underline{\C}\oplus \eta^{\otimes 2})$ and $S(\underline{\C}\oplus \eta^{\otimes(-a)})$ over $B_n$. Therefore $S(\underline{\C}\oplus q_1^*\xi_{n+2}) \to B_n$ is an $S^3 \times S^3$-bundle over $B_n$. Since $\xi_{n+1} =\eta^{\otimes 2}$ and $q_1^*\xi_{n+2} \cong p_2^*\eta^{\otimes(-a)}$, we have that the $S^3 \times S^3$-bundle $S(\underline{\C}\oplus q_1^*\xi_{n+2}) \to B_n$ has transition functions $\{(1, \phi_{\alpha\beta}^2, 1, \phi_{\alpha\beta}^{-a})\}$. On the other hand, each fiber of $B_{n+2} \to B_n$ is nothing but the quotient of $S^3 \times S^3$ by the $(S^1)^2$-action as we saw in Proposition \ref{prop:quotientconstruction} and Section \ref{sec:Hirzebruch}. Therefore the transition functions of $B_{n+2} \to B_n$ are 
			\begin{equation*}
				\begin{bmatrix}
		\begin{pmatrix}
			z_1 \\ \phi_{\alpha\beta}^2(x)w_1
		\end{pmatrix}, 
		\begin{pmatrix}
			z_2 \\ \phi_{\alpha\beta}^{-a}(x)w_2
		\end{pmatrix}
		\end{bmatrix}
			\end{equation*}
			for $x \in U_{\alpha\beta}$, $\begin{bmatrix}
		\begin{pmatrix}
			z_1 \\ w_1
		\end{pmatrix}, 
		\begin{pmatrix}
			z_2 \\ w_2
		\end{pmatrix}
		\end{bmatrix} \in \Sigma_a$. 
			Thus we have that the $S^1$-equivariant diffeomorphism $f$ of $\Sigma_a$ commutes with the transition functions. 
			Therefore the diffeomorphism $f$ of $\Sigma_a$ extends to the bundle automorphism $\widetilde{f}$ of the $\Sigma_a$-bundle $B_{n+2} \to B_n$.  
		\end{enumerate}
		
		\section{Algebra isomorphisms of Hirzebruch surface bundles}\label{sec:rigidity}
		We use the same notation as Sections \ref{sec:auto} and \ref{sec:realizing}. Let $\xi_{n+1}'$ be a complex line bundle over $B_n$ and $\pi_{n+1}' \co B_{n+1}' \to B_n$ be the projectivization $P(\underline{\C} \oplus \xi_{n+1}') \to B_n$. Let $\xi_{n+2}'$ be a complex line bundle over $B_{n+1}'$ and $\pi_{n+2}' \co B_{n+2}' \to B_{n+1}'$ be the projectivization $P(\underline{\C} \oplus \xi_{n+2}') \to B_{n+1}'$. As well as $H^*(B_{n+2})$, we think of $H^*(B_n)$ and $H^*(B'_{n+1})$ as subalgebras of $H^*(B'_{n+2})$ via the injections $(\pi_{n+1}')^*$ and $(\pi_{n+2}')^*$. Then $H^*(B_{n+2}')$ is freely generated by $x_{n+2}' = c_1(\gamma_{n+2}')$ and $x_{n+1}' = c_1(\gamma_{n+1}')$ as an $H^*(B_n)$-algebra, where $\gamma_{n+2}'$ and $\gamma_{n+1}'$ are tautological line bundles of $B_{n+2}' \to B_{n+1}'$ and $B_{n+1}' \to B_{n}$, respectively. 
		In this section, we show that if $H^*(B_{n+2})$ and $H^*(B_{n+2}')$ are isomorphic as $H^*(B_n)$-algebras then $B_{n+2}$ and $B_{n+2}'$ are isomorphic as bundles over $B_n$. 
		
		Suppose that $c_1(\xi_{n+2}') = a'x_{n+1}' + y'$ for $a' \in \Z$ and $y' \in H^2(B_n)$. Then the bundle $B_{n+2}' \to B_n$ is a fiber bundle with fiber $\Sigma_{a'}$. Let $\overline{x_1'}$ and $\overline{x_2'}$ be the image of $x_{n+1}'$ and $x_{n+2}'$ by the projection $H^2(B_{n+2}') \to H^2(\Sigma_{a'}) \cong H^2(B_{n+2}')/H^2(B_n)$, respectively. 
		Let $\widetilde{\psi} \co H^2(B_{n+2}) \to H^2(B_{n+2}')$ be a homomorphism as $\Z$-modules. Suppose that 
		\begin{equation*}
			\begin{split}
				\widetilde{\psi} (\overline{x_1}) &= \psi_{11}\overline{x_1'} + \psi_{21}\overline{x_2'}+v_1,\\
				\widetilde{\psi} (\overline{x_2}) &= \psi_{12}\overline{x_1'} + \psi_{22}\overline{x_2'}+v_2,
			\end{split}
		\end{equation*}
		here $\psi_{ij} \in \Z$ for $i, j=1, 2$ and $v_1, v_2 \in H^2(B)$. Assume that $\widetilde{\psi}$ extends to an isomorphism of $H^*(B_n)$-algebras. Then $\widetilde{\psi}$ descends to an isomorphism $\psi\co H^*(\Sigma_a) \to H^*(\Sigma_{a'})$ whose representation matrix with respect to $\overline{x_1}, \overline{x_2}$ and $\overline{x_1'}, \overline{x_2'}$ is 
			$\begin{pmatrix}
				\psi_{11} & \psi_{12}\\
				\psi_{21} & \psi_{22}
			\end{pmatrix}	$. In particular, $a$ and $a'$ have the same parity. If the representation matrix of $\psi$ is upper triangular, then $\widetilde{\varphi}$ is induced by a bundle isomorphism $B_{n+2} \to B_{n+2}'$ by Theorem \ref{theo:uppertriangular}. In the sequel of this section, we always assume that the representation matrix of $\psi$ is not upper triangular. 
		Let $\widetilde{\varphi} \co H^*(B_{n+2}') \to H^*(B_{n+2}')$ be the automorphism as an $H^*(B_{n})$-algebra given by $\widetilde{\varphi}(x_{n+1}') = x_{n+1}'$ and $\widetilde{\varphi}(x_{n+2}') = -x_{n+2}'+ c_1(\xi_{n+2}')$. Then $\widetilde{\psi}^{-1} \circ \widetilde{\varphi} \circ \widetilde{\psi} \co H^*(B_{n+2}) \to H^*(B_{n+2})$ is an automorphism as an $H^*(B_n)$-algebra. $\widetilde{\psi}^{-1} \circ \widetilde{\varphi} \circ \widetilde{\psi} \co H^*(B_{n+2}) \to H^*(B_{n+2})$ descends to an isomorphism $H^*(\Sigma_a) \to H^*(\Sigma_a)$ whose representation matrix with respect to $\overline{x_1}, \overline{x_2}$ is $\begin{pmatrix}
				-1 & -a\\
				0 & 1
			\end{pmatrix}$. 	
			
		As before, for a cohomology class $\alpha$ of degree $2$, we denote by $\gamma_k^\alpha$ a line bundle over $B_k$ such that $c_1(\gamma_k^\alpha) = \alpha$. We also denote by $\gamma_{n+1'}^\alpha$ and $\gamma_{n+2'}^\alpha$ line bundles over $B_{n+1}'$ and $B_{n+2}'$, respectively. 
		\begin{enumerate}
			\item Suppose that $a$ and $a'$ are $0$. Since $a$ and $a'$ are $0$, $B_{n+2}$ is isomorphic to the fiber product of $B_{n+1} = P (\underline{\C} \oplus \xi_{n+1}) \to B_n$ and $P(\underline{\C} \oplus \gamma_n^y) \to B_n$ and $B_{n+2}'$ is the fiber product of $B_{n+1}' = P(\underline{\C} \oplus \xi_{n+1}')\to B_n$ and $P(\underline{\C}\oplus \gamma_n^{y'}) \to B_n$. We will show that $P(\underline{\C}\oplus \xi_{n+1}) \cong P(\underline{\C}\oplus \gamma_n^{y'})$ and $P(\underline{\C} \oplus \gamma_n^y) \cong P(\underline{\C} \oplus \xi_{n+1}')$ as bundles over $B_n$. Since the primitive square zero elements in $H^2(\Sigma_0)$ are $\pm \overline{x_1}= \pm \overline{x_1'}$ and $\pm \overline{x_2} = \pm \overline{x_2'}$, and $\psi$ is an isomorphism whose representation matrix is not upper triangular, we have that $\psi(\overline{x_1}) = s_1\overline{x_2'}$ and $\psi(\overline{x_2}) = s_2\overline{x_1'}$ for some $s_1, s_2 = \pm 1$. It follows from $\widetilde{\psi}(x_{n+1})\widetilde{\psi}(x_{n+1}-c_1(\xi_{n+1}))=0$ that  
				\begin{equation*}
					\begin{split}
						0 &= \widetilde{\psi}(x_{n+1})\widetilde{\psi}(x_{n+1}-c_1(\xi_{n+1}))\\
						&= x_{n+2}'(y'+2s_1v_1-s_1c_1(\xi_{n+1}))+ v_1(v_1-c_1(\xi_{n+1})). 
					\end{split}
				\end{equation*}
				Thus we have $y' \pm c_1(\xi_{n+1})$ is even and $y'^2 = c_1(\xi_{n+1})^2$. Therefore
				\begin{equation*}
					\begin{split}
						P(\underline{\C} \oplus \xi_{n+1}) &\cong P(\underline{\C} \oplus \gamma_n^{c_1(\xi_{n+1})}) \\
						&\cong P(\gamma_n^{\frac{1}{2}(y'-c_1(\xi_{n+1}))} \oplus \gamma_n^{\frac{1}{2}(c_1(\xi_{n+1}) + y')}) \\
						& \cong P(\underline{\C} \oplus \gamma_n^{y'})
					\end{split}
				\end{equation*}
				by Lemma \ref{lemm:tensor} and Theorem \ref{theo:decomposable}.
				It follows from $\widetilde{\psi}(x_{n+2})\widetilde{\psi}(x_{n+2}-c_1(\xi_{n+2}))=0$ that  
				\begin{equation*}
					\begin{split}
						0 &= \widetilde{\psi}(x_{n+2})\widetilde{\psi}(x_{n+2}-ax_{n+1}-y)\\
						&= x_{n+1}'(c_1(\xi_{n+1}')-2s_2v_2-s_2y) + v_2(v_2-y).
					\end{split}
				\end{equation*}
				Thus we have $c_1(\xi_{n+1}') \pm y$ is even and $c_1(\xi_{n+1}')^2 =y^2$. Therefore
				\begin{equation*}
					\begin{split}
						P(\underline{\C} \oplus \xi_{n+1}') &\cong P(\underline{\C} \oplus \gamma_n^{c_1(\xi_{n+1}')}) \\
						&\cong P(\gamma_n^{\frac{1}{2}(y-c_1(\xi_{n+1}'))} \oplus \gamma_n^{\frac{1}{2}(c_1(\xi_{n+1}') + y)}) \\
						& \cong P(\underline{\C} \oplus \gamma_n^{y})
					\end{split}
				\end{equation*}
				by Lemma \ref{lemm:tensor} and Theorem \ref{theo:decomposable}. Therefore $B_{n+2}$ and $B_{n+2}'$ are isomorphic as bundles over $B_n$. 
				
			\item Suppose that $a$ and $a'$ are even and one of them is nonzero. 
			If $a$ is nonzero, then it follows from (2-ii) in Section \ref{sec:auto} that $y=-\frac{a}{2}c_1(\xi_{n+1})$. Thus by Lemma \ref{lemm:tensor} and Theorem \ref{theo:decomposable} we have that
			\begin{equation*}
				\begin{split}
					P(\underline{\C} \oplus \xi_{n+2}) & \cong P(\underline{\C} \oplus \gamma_{n+1}^{ax_{n+1}+y})\\
					& \cong P(\underline{\C}\oplus \gamma^{ax_{n+1}-\frac{a}{2}c_1(\xi_{n+1}))})\\
					& \cong P(\gamma_{n+1}^{-\frac{a}{2}x_{n+1}} \oplus \gamma_{n+1}^{\frac{a}{2}(x_{n+1}-c_1(\xi_{n+1}))})\\
					& \cong P(\underline{\C} \oplus \gamma_{n+1}^{-\frac{a}{2}c_1(\xi_{n+1})})\\
					& \cong P(\underline{\C} \oplus \gamma_{n+1}^{y})
				\end{split}
			\end{equation*}
			as bundles over $B_{n+1}$. Therefore we may assume that $a=0$. Using the same argument, we may assume that $a'=0$. It follows from (1) in this section that $B_{n+2}$ and $B_{n+2}'$ are isomorphic as bundles over $B_n$. 
			
			\item Suppose that $a$ and $a'$ are odd. 
			By (3-ii) in Section \ref{sec:auto}, we have that $y=-\frac{a}{2}c_1(\xi_{n+1})$. By the same argument we have that $y'=-\frac{a'}{2}c_1(\xi_{n+1}')$. 
			Since the primitive square zero elements in $H^2(\Sigma_a)$ (respectively, $H^2(\Sigma_{a'})$) are $\pm \overline{x_1}$ (respectively, $\pm \overline{x_1'}$) and $\pm (2\overline{x_2} - a\overline{x_1})$ (respectively, $\pm (2\overline{x_2'} - a'\overline{x_1'})$) and the representation matrix of the isomorphism $\psi \co H^2(\Sigma_a) \to H^2(\Sigma_{a'})$ is not upper triangular, we have that $\psi(\overline{x_1}) = s_1(2\overline{x_2'}-a'\overline{x_1'})$ and $\psi(2\overline{x_2}-a\overline{x_1}) = s_2\overline{x_1'}$ for $s_1, s_2 = \pm 1$. Then $\psi(\overline{x_2}) = as_1\overline{x_2'} + \frac{s_2-s_1aa'}{2}\overline{x_1'}$.	
			It follows from $\widetilde{\psi}(x_{n+1})\widetilde{\psi}(x_{n+1}-c_1(\xi_{n+1}))=0$ that 
			\begin{equation*}
				\begin{split}
					0 & = \widetilde{\psi}(x_{n+1})\widetilde{\psi}(x_{n+1}-c_1(\xi_{n+1})) \\
					&= x_{n+2}'(-2a'c_1(\xi_{n+1}')+4s_1v_1-2s_1c_1(\xi_{n+1}))\\
					& \quad + s_1a'x_{n+1}'(s_1a'c_1(\xi_{n+1}')-2v_1+c_1(\xi_{n+1}))\\
					& \quad + v_1(v_1-c_1(\xi_{n+1})).
				\end{split}
			\end{equation*}
			Thus we have $v_1=\frac{1}{2}(s_1a'c_1(\xi_{n+1}') + c_1(\xi_{n+1}))$ and $a'^2c_1(\xi_{n+1}')^2=c_1(\xi_{n+1})^2$. It follows from $\widetilde{\psi}(x_{n+2})\widetilde{\psi}(x_{n+2}-c_1(\xi_{n+2}))=0$ that 
			\begin{equation*}
				\begin{split}
					0 &=\widetilde{\psi}(x_{n+2})\widetilde{\psi}(x_{n+2}-c_1(\xi_{n+2}))\\
					&=x_{n+1}'(\frac{1-s_1s_2aa'}{4}c_1(\xi_{n+1}')+s_2v_2)\\
					& \quad + v_2(v_2-\frac{s_1aa'}{2}c_1(\xi_{n+1}')).
				\end{split}
			\end{equation*}
			Thus we have $v_2 = \frac{s_1aa'-s_2}{4}c_1(\xi_{n+1}')$ and $(a^2a'^2 -1)c_1(\xi_{n+1}')^2 = 0$. 
			If $a^2a'^2\neq 1$, then $c_1(\xi_{n+1}')^2 = 0$. By considering $\widetilde{\psi}^{-1}$ instead of $\widetilde{\psi}$, we also have that $c_1(\xi_{n+1})^2=0$. By (3-ii) in Section \ref{sec:auto}, we have that $c_1(\xi_{n+1})$ is even. By the same argument we also have that $c_1(\xi_{n+1}')$ is even. 
			Using the same argument as (3) in Section \ref{sec:realizing}, we have that $B_{n+2}$ and $B_{n+2}'$ are trivial bundles over $B_n$. Since the parity of $a$ and $a'$ are the same, we have that $B_{n+2}$ and $B_{n+2}'$ are isomorphic as bundles. 
			
			Suppose that $a^2a'^2=1$. Then $a' = \pm 1$ and we have that $c_1(\xi_{n+1}')^2 = c_1(\xi_{n+1})^2$ and $c_1(\xi_{n+1}') \pm c_1(\xi_{n+1})$ is even. Therefore it follows from Lemma \ref{lemm:tensor} and Theorem \ref{theo:decomposable} that
			\begin{equation*}
				\begin{split}
					P(\underline{\C} \oplus \xi_{n+1}) &\cong P(\underline{\C}\oplus \gamma_n^{c_1(\xi_{n+1})})\\
					&\cong P(\gamma_n^{\frac{1}{2}(-c_1(\xi_{n+1})+c_1(\xi_{n+1}'))} \oplus \gamma_n^{\frac{1}{2}(c_1(\xi_{n+1})+c_1(\xi_{n+1}'))})\\
					&\cong P(\underline{\C}\oplus \gamma_n^{c_1(\xi_{n+1}')})\\
					&\cong P(\underline{\C} \oplus \xi_{n+1}')
				\end{split}
			\end{equation*}
			as bundles over $B_n$. Let $\widetilde{h} \co B_{n+1} \to B_{n+1}'$ be the bundle isomorphism and $\xi_{n+2}''$ the pull-back bundle of $\xi_{n+2}'$ by $\widetilde{h}$. Let $B_{n+2}'' \to B_{n+1}$ be the projectivization $P(\underline{\C} \oplus \xi_{n+2}'') \to B_{n+1}$ and $x_{n+2}''$ the first Chern class of the tautological line bundle of $B_{n+2}$. Let $\widetilde{g} \co B_{n+2}'' \to B_{n+2}'$ be the isomorphism as bundles over $B_{n+1}'$ induced by the pull-back  as bundles over $B_{n+1}$. We have the commutative diagram
				\begin{equation*}
					\begin{tikzcd}[ampersand replacement = \&]
						B_{n+2} = P(\underline{\C}\oplus \xi_{n+2}) \arrow[dr] \& \& \\
						\& B_{n+1} = P(\underline{\C} \oplus \xi_{n+1}) \arrow[dr] \arrow[dd, "\widetilde{h}"] \& \\
						B_{n+2}'' = P(\underline{\C}\oplus \xi_{n+2}'') \arrow[dr]\arrow[dd, "\widetilde{g}"] \arrow[ur] \& \& B_n, \\
						\& B_{n+1}' = P(\underline{\C}\oplus \xi_{n+1}') \arrow[ur] \& \\
						B_{n+2}' = P(\underline{\C}\oplus \xi_{n+2}') \arrow[ur] 
					\end{tikzcd}
				\end{equation*}
				here vertical arrows are isomorphisms as $\C P^1$-bundles and right arrows are projections of $\C P^1$-bundles. 
			The representation matrix of $\widetilde{g}^* \circ \widetilde{\psi} \co  H^*(B_{n+2}) \to H^*(B_{n+2}'')$ is not upper triangular because the one of $\widetilde{g}^*$ is upper triangular but the one of $\widetilde{\psi}$ is not upper triangular. Thus we have that there exists an odd $a''$ such that $c_1(\xi_{n+2}'') = a''x_{n+1} - \frac{a''}{2}c_1(\xi_{n+1})$ by (3-ii) in Section \ref{sec:auto}. If $a'' \neq \pm 1$, then $a^2a''^2 \neq 1$. By the same argument as the case when $a^2a'^2 \neq1$ we have that $B_{n+2}$ and $B_{n+2}''$ are trivial bundles over $B_n$. If $a'' = \pm 1$, then we have that $B_{n+2}$ and $B_{n+2}''$ are isomorphic as bundles over $B_{n+1}$ because $c_1(\xi_{n+2}'') = \pm c_1(\xi_{n+2})$. So $B_{n+2}$ and $B_{n+2}'$ are isomorphic as bundles over $B_n$. 
		\end{enumerate}
\bibliographystyle{alpha}
\bibliography{bibliography.bib}

\end{document}